\documentclass[reqno, 12pt]{amsart}
\usepackage[T1]{fontenc}    
\usepackage[utf8]{inputenc} 
\usepackage{lmodern}       
\usepackage{mathrsfs}
\usepackage{amscd}
\usepackage{amsmath}
\usepackage{latexsym}
\usepackage{amsfonts}
\usepackage{amssymb}
\usepackage{amsthm}
\usepackage{graphicx}
\usepackage{makecell}
\usepackage{array}
\usepackage{booktabs}
\usepackage{multirow}
\usepackage{color,xcolor}
\usepackage{comment}
\allowdisplaybreaks[4] 

\usepackage{fancyhdr}
\newtheorem{theorem}{Theorem}[section]
\newtheorem{proposition}[theorem]{Proposition}

\newtheorem{lemma}[theorem]{Lemma}
\newtheorem{definition}[theorem]{Definition}

\newtheorem{remark}[theorem]{Remark}

\usepackage[colorlinks=true, linkcolor=blue]{hyperref} 
\numberwithin{equation}{section}

\title{inverse boundary value problems of determining nonlinear coefficients for the JMGT equation } 
\author[Qiu]{Dong Qiu}
\address{School of Mathematical Sciences, Zhejiang University}
\email{qiudong@zju.edu.cn}

\author[Xu]{Xiang Xu}
\address{School of Mathematical Sciences, and Center for Interdisciplinary Applied Mathematics, Zhejiang University}
\email{xxu@zju.edu.cn}

\author[Ye]{Yeqiong Ye}
\address{School of Mathematical Sciences, Zhejiang University}
\email{yeyeqiong@zju.edu.cn}

\author[Zhou]{Ting Zhou}
\address{School of Mathematical Sciences, Zhejiang University}
\email{ting\_zhou@zju.edu.cn}
\subjclass[2020]{35R30, 35L05, 44A12}
\keywords{Jordan--Moore--Gibson--Thompson equation, inverse boundary value problem, second order linearization, Gaussian beam solutions, Dirichlet-to-Neumann map}

\begin{document}
\begin{abstract} 
We consider inverse boundary value problems for the Jordan-Moore-Gibson-Thompson (JMGT) equation in nonlinear acoustics with quadratic nonlinearities of Kuznetsov-type and Westervelt-type. 
We show that the associated boundary Dirichlet-to-Neumann map uniquely determines the nonlinear coefficients $\beta$ in the Westervelt-type model, and the pair $(\beta,\kappa)$ in the Kuznetsov-type model, provided that the observation time is greater than the maximal boundary-to-boundary geodesic travel time. 
The results are obtained in both the Euclidean setting and on compact Riemannian manifolds with proper geometric assumptions. 
The proof is based on the idea of second order linearization combined with the construction of geometric optics and Gaussian beam solutions, reducing the inverse problem of uniqueness to the injectivity of associated geodesic ray transforms.
\end{abstract}

\maketitle
\section{Introduction}
\subsection{Statement of the Problem}
In this article, we study inverse boundary value problems for the Jordan--Moore--Gibson--Thompson (JMGT) equation and ask whether its nonlinear coefficients can be uniquely determined from boundary measurements.

JMGT equations arise in nonlinear acoustics as higher-order models for high-intensity sound propagation, extending the classical Westervelt and Kuznetsov equations by incorporating thermoviscous dissipation and relaxation effects. As third-order-in-time models, they capture finite-speed wave propagation together with frequency-dependent attenuation, and are therefore particularly relevant in applications such as high-intensity medical ultrasound. We study the JMGT inverse problem on a Riemannian manifold in order to model inhomogeneous media and to relate high-frequency propagation to geodesic ray transforms.

Let $(M,g)$ be a smooth compact Riemannian manifold of dimension $n\geq2$ with smooth boundary $\partial M$.
We also assume that $T>0$, $\mathcal{M}:=[0,T] \times M$, $\alpha,b,c \in C^{\infty}(M)$ and $b>0$,  and $\Gamma:=[0,T] \times \partial M$.
We denote by $\Delta_g$ the Laplace--Beltrami operator on $(M,g)$. In local coordinates,
\[
\Delta_g u=\sum_{i,j=1}^n|g|^{-\frac12}\partial_{x_i}\Bigl(|g|^{\frac12}g^{ij}\partial_{x_j}u\Bigr),
\]
where $g^{-1}=(g^{ij})_{1\le i,j\le n}$ and $|g|=\det(g_{ij})$.
We consider the JMGT equation with Kuznetsov-type nonlinearity in $\mathcal{M}$ 
\begin{equation}\label{JMGT}
\left\{
\begin{aligned}
&\partial_t^3u + \alpha \partial_{t}^2 u  - b\Delta_g \partial_t u-c^2\Delta_g u = \partial_t \left(\beta(x)(\partial_tu)^2+\kappa(x)\left| \nabla_g u  \right|_{g}^2   \right) 
    && \text{in } \mathcal{M}, \\
&u= f(t,x)
    && \text{on } \Gamma, \\
&u(0,x)=v_0(x), \partial_t u(0,x) = v_1(x), \partial_t^2u(0,x)=v_2(x) 
    && \text{on } M,
\end{aligned}
\right.
\end{equation}
where $\alpha$, $b$, and $c$ denote the frictional damping coefficient, the sound diffusivity, and the speed of sound, respectively. Here, $\beta$ denotes the acoustic nonlinearity coefficient associated with the nonlinear pressure–density relation of the medium, while 
$\kappa$ denotes the coefficient of the quadratic gradient nonlinearity, describing nonlinear effects induced by spatial gradients of the acoustic field. Both are key target coefficients in nonlinear ultrasonic tomography, and their accurate identification may provide valuable information for ultrasound-based diagnosis of diseases such as breast tumors and liver fibrosis.
We also consider the Westervelt-type nonlinearity, which may be viewed as a simplified model, given by
\begin{equation}\label{JMGT2}
\left\{
\begin{aligned}
&\partial_t^3u + \alpha \partial_{t}^2 u  - b\Delta_g \partial_t u-c^2\Delta_g u = \partial_t^2 \left(\beta(x)u^2   \right) 
    && \text{in } \mathcal{M}, \\
&u= f(t,x)
    && \text{on } \Gamma, \\
&u(0,x)=v_0(x), \partial_t u(0,x) = v_1(x), \partial_t^2u(0,x)=v_2(x) 
    && \text{on } M.
\end{aligned}
\right.
\end{equation}
For a pair of coefficients $(\beta,\kappa)$ and fixed zero initial data $(v_0,v_1,v_2)=(0,0,0)$, the corresponding \textit{Dirichlet-to-Neumann} (DtN) map for \eqref{JMGT} is  defined by
\begin{align}
&\Lambda^{\textrm{K}}_{\beta,\kappa}:f \mapsto c^2\partial_{\nu}u+b\partial_{\nu}\partial_t u \big|_{\Gamma},
\end{align}
where $u$ is the solution to \eqref{JMGT}, $\nu$ denotes the unit outward normal vector of $\partial M$ with respect to the Riemannian metric $g$, and $\partial_{\nu} u=g(\nabla_g u,\nu)$. For the Westervelt-type nonlinearity and fixed zero initial data $(v_0,v_1,v_2)=(0,0,0)$, we define its DtN map as
\[
\Lambda^\textrm{W}_{\beta}:f \mapsto c^2\partial_{\nu}u+b\partial_{\nu}\partial_t u \big|_{\Gamma},
\]
where $u$ is the solution to \eqref{JMGT2}.
We refer to Section \ref{wellposedness} for local well-posedness of the equations \eqref{JMGT} and \eqref{JMGT2}, which ensures that $\Lambda^{\textrm{K}}_{\beta,\kappa}$ and $\Lambda_{\beta}^{\textrm{W}}$ are well defined.
The inverse problems we consider are the following:
\begin{itemize}
\item Can we recover  $\beta$  from  the DtN map $\Lambda^\textrm{W}_{\beta}$?
\item Can we recover  $\beta$ and $\kappa$  from the DtN map  $\Lambda^\textrm{K}_{\beta,\kappa}$?
\end{itemize}

Before presenting our main results, we introduce the following notation and definitions. 
For a nonnegative integer $m$, we define the following function spaces:
\begin{align*}
H^{m}(\Gamma) &= \bigcap_{k=0}^{m} H^{k}\bigl([0,T]; H^{m-k}(\partial M)\bigr), \\
A_{m} &= \bigcap_{k=0}^{m+1} C^{k}\bigl([0, T]; H^{m+\frac{3}{2}-k}(\partial M)\bigr) \cap H^{m+2}(\Gamma), \\
N_{m+1} &=
\bigcap_{k=1}^{m+1}H^{k}\bigl([0, T]; H^{m+1-k}(\partial M)\bigr).
\end{align*}
We work in the \textit{energy spaces} $E^m$, defined by 
\begin{equation*}
E^m=\bigcap\limits_{k=0}^{m} C^k\left([0,T];H^{m-k}(M)\right),
\end{equation*}
equipped with the norm 
\begin{equation*}
\|u\|_{E^m}:=\sup_{t\in[0,T]}\sum_{k=0}^{m}\|\partial_t^k u(\cdot,t)\|_{H^{m-k}(M)}.
\end{equation*}
The space $E^m$ is an algebra if $m >n+1$ (see \cite{choquet-bruhat_general_2009}). Moreover, it satisfies the norm estimate
\begin{equation*}
||uv||_{E^m}\leq C_m ||u||_{E^m}||v||_{E^m},  \quad \text{for all } u,v\in E^m,
\end{equation*}
where $n$ is the dimension of $M$ and $C_m$ is a constant depending on $m$. 

\subsection{Main Results} 
For the following main results, we assume that $m$ is a positive integer and $m>n+1$. We introduce the compatibility conditions up to order $m+1$ required for the well-posedness results
\begin{equation}
\left\{
\begin{aligned}
&\partial_t^{k} f(0,x)=v_k(x),
\qquad &&\hspace{-4em}\text{on }\partial M,\ k=0,1,2,\\
&\partial_t^{k} f(0,x)
= -\alpha\,\partial_t^{k-1}u(0,x)
+ b\,\Delta_g \partial_t^{k-2}u(0,x)
+ c^{2}\,\Delta_g \partial_t^{k-3}u(0,x)\\  \label{compa1}
&\qquad\quad
\qquad &&\hspace{-4em}\text{on }\partial M,\ 3\le k\le m+1.
\end{aligned}
\right.
\end{equation}

We first state our well-posedness result for equation \eqref{JMGT}.
\begin{proposition} \label{propo1}
Fix $\beta \in C^m(\overline M)$ and $\kappa \in C^m( \overline M)$. We assume that $\beta=0, \ \kappa=0$ on $\partial M$. There exists a constant $\delta$ such that if the boundary and initial values $(v_0,v_1,v_2,f)$ belong to the set 
\begin{equation}\label{defN}
\begin{aligned}
\mathcal{N}_{\delta}
=
\Bigl\{
(v_0,v_1,v_2,f)
\in
H^{m+2}(M)\times H^{m+1}(M)\times H^{m}(M)\times A_m
:
\Bigr. \\
\|v_0\|_{H^{m+2}(M)}
+\|v_1\|_{H^{m+1}(M)}
+\|v_2\|_{H^{m}(M)}
+\|f\|_{H^{m+2}(\Gamma)}
< \delta,
\\
(v_0,v_1,v_2,f)
\text{ satisfy the compatibility conditions up to order } m+1
\text{ defined in } \eqref{compa1} 
\Bigl\}.
\end{aligned}
\end{equation}
then the nonlinear system \eqref{JMGT}  admits a unique solution $u \in E^{m+2}$ with $\partial_{\nu}u \in N_{m+1}$, and there exists a positive constant $C$ such that 
\begin{align} \label{estimate3}
\left\|u \right\|_{E^{m+2}}+\left\| \partial_{\nu}u  \right\|_{N_{m+1}}\leq  C   
\left( \left\|v_0\right\|_{H^{m+2}(M)}+ \left\|v_1\right\|_{H^{m+1}(M)}+\left\|v_2\right\|_{H^{m}(M)}+\left\|f\right\|_{H^{m+2}(\Gamma)} \right).
\end{align}
\end{proposition}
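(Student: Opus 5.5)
Proposition \ref{propo1} will follow from a linear theory for the JMGT operator combined with a contraction mapping argument in $E^{m+2}$, exploiting the fact that $E^{m+1}$ is an algebra since $m>n+1$.

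\textbf{Linear problem.} The first step is the linear problem
\[
\partial_t^3 w+\alpha\partial_t^2 w-b\Delta_g\partial_t w-c^2\Delta_g w=F,\qquad w|_\Gamma=f,\qquad (w,\partial_tw,\partial_t^2w)(0)=(v_0,v_1,v_2).
\]
Since $b>0$, I will rewrite it as $\partial_t z - b\Delta_g z=\dots$ for $z=\partial_t w+\frac{c^2}{b}w$. Then $z$ solves a parabolic equation, with lower-order coupling $\partial_t^2 w$ written in terms of $z$ and $w$, and $w$ solves the ODE $\partial_t w+\frac{c^2}{b}w=z$.

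Alternatively, I will treat it as a first-order system in $(w,\partial_tw,\partial_t^2w)$ generated by a $C_0$-semigroup on $H^1_0\times H^1_0\times L^2$, as in the Kaltenbacher–Lasiecka–Marchand analysis of MGT. The boundary data are handled by subtracting a lifting $\tilde f\in E^{m+2}$ of $f\in A_m$.

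Higher regularity comes from differentiating the equation in time up to order $m+1$. The compatibility conditions \eqref{compa1} guarantee that the time derivatives of the lifted data vanish at $t=0$ on $\partial M$. Elliptic regularity for $\Delta_g$, applied to $\Delta_g(b\partial_t w+c^2 w)=\partial_t^3w+\alpha\partial_t^2w-F$, then trades time derivatives for space derivatives.

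The target estimate is
\[
\|w\|_{E^{m+2}}+\|\partial_\nu w\|_{N_{m+1}}\le C\bigl(\|v_0\|_{H^{m+2}}+\|v_1\|_{H^{m+1}}+\|v_2\|_{H^m}+\|f\|_{H^{m+2}(\Gamma)}+\|F\|_{E^{m}}+\|\partial_t^{m}F\|_{L^1(0,T;L^2)}\bigr).
\]
The exact norm on $F$ will be dictated by the energy identity. The bound on $\partial_\nu w$ in $N_{m+1}$ follows from trace theorems applied to $\partial_t^k w\in C([0,T];H^{m+2-k})$.

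\textbf{Nonlinear term.} Next I will estimate the nonlinearity $F(u)=\partial_t(\beta(\partial_tu)^2+\kappa|\nabla_gu|_g^2)$. For $u\in E^{m+2}$ we have $\partial_tu,\nabla_g u\in E^{m+1}$. Since $E^{m+1}$ is an algebra and $\beta,\kappa\in C^m$, the quadratic expression lies in $E^{m}$-type spaces, up to a loss of coefficient smoothness that must be checked. Consequently $F(u)$ has one fewer time derivative available.

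The main obstacle is this apparent loss of one derivative: $F$ contains $\partial_t^2 u\cdot\partial_t u$ and $\nabla\partial_t u\cdot\nabla u$, and these are only in $E^m$. Here the parabolic smoothing from $b>0$ is essential. The strong damping $-b\Delta_g\partial_t$ gives maximal regularity of $\partial_t w$ in $L^2(0,T;H^{m+2})$-type norms, which recovers the derivative.

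My plan is to use the $z$-reformulation, so that the source enters a heat equation for $z$. Maximal parabolic regularity then gives $z\in H^1(0,T;H^{m})\cap L^2(0,T;H^{m+2})$-type control from $F\in L^2(0,T;H^m)$-type norms, iterated in time derivatives.

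The hypothesis $\beta=\kappa=0$ on $\partial M$ ensures that $F$ vanishes on $\Gamma$. This is needed so that the time-differentiated problems satisfy the boundary compatibility, and so that the nonlinear iteration preserves \eqref{compa1}.

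\textbf{Fixed point.} Finally I will define $\Phi(u)=w$, where $w$ solves the linear problem with $F=F(u)$. On the ball $\{\|u\|_{E^{m+2}}\le 2C\delta\}$ the quadratic estimate gives
\[
\|\Phi(u)\|\le C\delta+C'(2C\delta)^2
\]
and
\[
\|\Phi(u)-\Phi(v)\|\le C'\,\delta\,\|u-v\|,
\]
where the difference bound uses $a^2-b^2=(a+b)(a-b)$. For $\delta$ small, $\Phi$ is a contraction. Its fixed point is the unique solution in the ball, and the estimate \eqref{estimate3} follows from the linear estimate together with absorbing the quadratic term.
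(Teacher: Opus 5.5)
\textbf{The step your argument hinges on fails: the linear JMGT operator is hyperbolic, not parabolic.} With $z=\partial_t w+\frac{c^2}{b}w$ one has $\partial_t^3w+\alpha\partial_t^2w=\partial_t^2z+\gamma\,\partial_t^2w$ and $b\Delta_g z=b\Delta_g\partial_tw+c^2\Delta_g w+(\text{first-order terms in } w)$. So $z$ solves the \emph{wave} equation $\partial_t^2z-b\Delta_g z=F-\gamma\,\partial_t^2w+\dots$, not $\partial_t z-b\Delta_g z=\dots$; the term $\partial_t^3w$ is principal.

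The frozen-coefficient equation $\lambda^3+\alpha\lambda^2+b|\xi|^2\lambda+c^2|\xi|^2=0$ has roots $\lambda=\pm\mathrm{i}\sqrt b\,|\xi|-\frac{\gamma}{2}+O(|\xi|^{-1})$ and $\lambda\to-\frac{c^2}{b}$. High frequencies are therefore not damped at all. This is why the generator produces a strongly continuous \emph{group} (the Kaltenbacher--Lasiecka--Marchand framework used in Lemma \ref{2}). It is also why solutions travel at finite speed $\sqrt b$, which the geometric optics and Gaussian beam constructions later in the paper rely on.

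Your $z$-reduction is the parabolic one for the strongly damped Kuznetsov/Westervelt equation, i.e.\ zero relaxation time. For JMGT, $F\in L^2(0,T;H^m)$ gives only $z\in C([0,T];H^{m+1})$, not $L^2(0,T;H^{m+2})$. So the maximal-regularity step you describe cannot be carried out.

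\textbf{No derivative actually has to be recovered; your own target estimate already suffices.} Since $\beta,\kappa$ are independent of $t$, you can write $F(u)=\beta\,\partial_t(\partial_tu)^2+\kappa\,\partial_t|\nabla_gu|_g^2$. For $u\in E^{m+2}$ both squares lie in the algebra $E^{m+1}$, $\partial_t$ maps $E^{m+1}$ into $E^m$, and multiplication by a $C^m$ function preserves $E^m$. Hence $\|F(u)\|_{E^m}\le C\|u\|_{E^{m+2}}^2$, with no loss of coefficient smoothness.

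The linear estimate $\|w\|_{E^{m+2}}\lesssim(\text{data})+\|F\|_{E^m}$ is a hyperbolic gain of two coming from the third-order structure. The paper obtains it from the group on $\mathcal{D}(\mathcal{A}_{g'})\times\mathcal{D}(\mathcal{A}_{g'}^{1/2})\times L^2$ (not $H^1_0\times H^1_0\times L^2$, which only reaches the $E^1$ level) together with energy identities. With that estimate the contraction closes exactly as in the paper. The paper iterates on $u-u_0$ in a small ball of $E^{m+2}$ and uses $\beta=\kappa=0$ on $\partial M$ for compatibility of the zero data, as you noted.

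\textbf{A second point also needs repair: the trace estimate.} The $N_{m+1}$ bound on $\partial_\nu w$ does not follow from trace theorems, which lose half a derivative. For example, $\partial_t^{m+1}w\in C([0,T];H^1(M))$ gives no $L^2(\Gamma)$ trace of $\partial_\nu\partial_t^{m+1}w$. The bound is a hidden-regularity property of the hyperbolic problem. The paper obtains it from the multiplier identities \eqref{partialv1}--\eqref{partialv2}, using a vector field $H$ that extends $\nu$.
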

We next state our well-posedness result for equation \eqref{JMGT2}.
\begin{proposition} \label{propo2}
Fix $\beta \in C^m(\overline M)$. We assume that $\beta=0$ on $\partial M$. There exists a constant $\delta$ such that if the boundary and initial values $(v_0,v_1,v_2,f)$ belong to the set $\mathcal{N}_{\delta}$ defined in \eqref{defN}.
Then the nonlinear system \eqref{JMGT2}  admits a unique solution $u \in E^{m+2}$ with $\partial_{\nu}u \in N_{m+1}$, and there exists a positive constant $C$ such that 
\begin{align} \label{estimate5}
\left\|u \right\|_{E^{m+2}}+\left\| \partial_{\nu}u  \right\|_{N_{m+1}}\leq  C   
\left( \left\|v_0\right\|_{H^{m+2}(M)}+ \left\|v_1\right\|_{H^{m+1}(M)}+\left\|v_2\right\|_{H^{m}(M)}+\left\|f\right\|_{H^{m+2}(\Gamma)} \right).
\end{align}
\end{proposition}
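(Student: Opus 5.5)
The plan is to prove Proposition~\ref{propo2} by the same contraction-mapping scheme used for Proposition~\ref{propo1}. The only change is the form of the nonlinearity: it is now $\partial_t^2(\beta u^2)$ instead of $\partial_t(\beta(\partial_t u)^2+\kappa|\nabla_g u|_g^2)$.

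\textbf{Step 1: reduction to zero boundary data.} Extend $f$ to a function $F\in E^{m+2}$ on $\mathcal M$, and choose the extension so that its initial traces match $(v_0,v_1,v_2)$ to the order allowed by \eqref{compa1}. Split $u=F+w$. Then $w$ vanishes on $\Gamma$ and solves
\[
\partial_t^3 w+\alpha\partial_t^2 w-b\Delta_g\partial_t w-c^2\Delta_g w=G(w),
\]
where
\[
G(w)=-(\partial_t^3+\alpha\partial_t^2-b\Delta_g\partial_t-c^2\Delta_g)F+\partial_t^2\bigl(\beta(F+w)^2\bigr).
\]

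\textbf{Step 2: the linear problem.} For a source $h\in E^{m}$, with $\partial_t^{m+1}h$ suitably controlled, and compatible data, I will use the linear estimate
\[
\|w\|_{E^{m+2}}+\|\partial_\nu w\|_{N_{m+1}}\le C\bigl(\text{data}+\|h\|_{E^m}+\cdots\bigr).
\]
I expect to prove it as follows. Write the equation as $\partial_t(\partial_t^2 w-b\Delta_g w)+\ldots$ and introduce $z=\partial_t w+\frac{c^2}{b}w$. This turns the third-order equation into the coupled system
\[
\partial_t^2 z-b\Delta_g z=\Bigl(\tfrac{c^2}{b}-\alpha\Bigr)\partial_t^2 w+\cdots,
\]
which uses $b>0$. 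The estimate then follows from standard hyperbolic energy estimates for $z$, combined with an ODE bound for $w$ in terms of $z$. The normal trace bound comes from the hidden-regularity multiplier argument. Higher time derivatives are handled by differentiating the equation in $t$, and spatial regularity is recovered from elliptic regularity of $\Delta_g$.

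\textbf{Step 3: fixed point.} On the closed ball
\[
X_\rho=\{w\in E^{m+2}: \|w\|_{E^{m+2}}\le\rho,\ \text{zero traces}\},
\]
define $\Phi(w)$ to be the solution of the linear problem with source $G(w)$. The key estimate is
\[
\|\partial_t^2(\beta v_1v_2)\|_{E^{m}}\le C\|v_1\|_{E^{m+2}}\|v_2\|_{E^{m+2}}.
\]
This holds because $\partial_t^2(v_1v_2)$ is a sum of products of factors in $E^{m}$, and $E^m$ is an algebra since $m>n+1$.

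This estimate is also where I expect the main obstacle. The term $\partial_t^2(\beta u^2)$ contains $2\beta u\,\partial_t^2 u$, which involves the top time derivative. In the Kuznetsov case the analogous term is $2\beta\partial_t u\,\partial_t^2u$, and I intend to treat both the same way. Either the $E^{m}$-control of $\partial_t^2 u$ inside $E^{m+2}$ suffices because the linear operator gains one derivative relative to the source (the source appears under $\partial_t$ in the $z$-system), or I move the term to the left, giving the quasilinear principal part $(1-2\beta u)\partial_t^3u$. Smallness of $\|u\|_{E^{m+2}}$ keeps $1-2\beta u$ bounded away from zero, and the linear theory is then applied with this variable coefficient. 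I would try the first option first, checking that the source only needs to lie in $E^{m}$ with one extra time derivative.

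With this estimate in hand, $\delta$ and $\rho$ small give $\Phi(X_\rho)\subset X_\rho$ and $\|\Phi(w_1)-\Phi(w_2)\|\le\frac12\|w_1-w_2\|$, the latter estimated in a lower norm $E^{m+1}$ if necessary and closed using weak-$*$ compactness. The condition $\beta=0$ on $\partial M$ ensures that the nonlinear source does not interfere with the compatibility conditions \eqref{compa1}, so the iterates remain compatible.

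Banach's fixed point theorem then gives a unique $w$, hence $u=F+w$. The bound \eqref{estimate5} follows from the linear estimate together with $\|G(w)\|\le C(\text{data})+C\rho\|w\|$ after absorption, and uniqueness in $E^{m+2}$ follows from the contraction estimate.
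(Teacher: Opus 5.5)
Your skeleton is the paper's: a linear MGT estimate plus a contraction in a small ball of $E^{m+2}$, with the algebra property handling $\partial_t^2(\beta u^2)$. Your bound $\|\partial_t^2(\beta v_1v_2)\|_{E^m}\le C\|v_1\|_{E^{m+2}}\|v_2\|_{E^{m+2}}$ is correct.

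\textbf{Step 1 fails as written.} For a generic extension $F\in E^{m+2}$ of $f$, the residual $(\partial_t^3+\alpha\partial_t^2-b\Delta_g\partial_t-c^2\Delta_g)F$ contains $\partial_t^3F$ and $\Delta_g\partial_tF$. These lie only in $E^{m-1}$, since $\partial_t^{m+3}F$ and $\Delta_g\partial_t^{m+1}F$ are not controlled. The linear theory then returns $w\in E^{m+1}$ only, so you cannot get $\Phi(X_\rho)\subset X_\rho$. Even producing $F\in E^{m+2}$ is unclear, because $\partial_t^{m+2}F\in C([0,T];L^2(M))$ is not controlled by the trace data in $A_m$.

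The lift must itself solve the equation, or at least its principal part; this is why Lemma~\ref{2} lifts the boundary data by a wave equation. The paper takes $u_0$ to be the solution of the linear MGT problem with data $(v_0,v_1,v_2,f)$. Lemma~\ref{2} gives $u_0\in E^{m+2}$ with $\|u_0\|_{E^{m+2}}\le C\delta$ and no residual. The fixed point is then taken for a zero-data problem with source $\beta\,\partial_t^2(w+u_0)^2$. Its compatibility at $\{t=0\}\times\partial M$ is automatic because $\beta|_{\partial M}=0$.

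\textbf{The linear estimate must be exactly Lemma~\ref{2}.} You need: source in $E^m$, with no extra time derivative, gives a solution in $E^{m+2}$. This is a gain of two derivatives, matching the energy level $(w,w_t,w_{tt})\in H^2\times H^1\times L^2$ of this third-order operator. In your $z$-formulation, the $H^2$ bound does not come from the $z$-energy alone. It comes from the ODE $q_t+\tfrac{c^2}{b}q=h-\gamma w_{tt}$ satisfied by $q=w_{tt}-b\Delta_g w$, which then gives $b\Delta_g w=w_{tt}-q$. Your hedged requirement ``$\partial_t^{m+1}h$ suitably controlled'' would break the scheme, since $\partial_t^{m+1}\partial_t^2(\beta u^2)$ involves $\partial_t^{m+3}u\notin E^{m+2}$. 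Your description of the gain as ``one derivative'' is also off: it is two, matching exactly the two derivatives lost in $\partial_t^2(\beta u^2)$.

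Your quasilinear fallback rests on a miscount. We have $\partial_t^2(\beta u^2)=2\beta u\,\partial_t^2u+2\beta(\partial_tu)^2$, which contains no $\partial_t^3u$. Moving it to the left only perturbs the lower-order coefficient $\alpha$, so no quasilinear theory is needed. With the two-derivative gain, the contraction closes directly in $E^{m+2}$, by the same estimate applied to differences as in the proof of Proposition~\ref{propo1}. You do not need to pass to $E^{m+1}$ or use weak-$*$ compactness.
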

We first state the main inverse results in the Euclidean setting. 
Let $M=\Omega \subset \mathbb{R}^n $ with $n \geq 2$ be a bounded and connected domain with a smooth boundary. We still denote $\Gamma:=[0,T] \times \partial \Omega$.
We introduce a parameter 
\begin{equation}
\gamma=\alpha-\frac{c^2}{b},
\end{equation}
which is related to exponential stability of the linearized system. We assume that  $\gamma>0$ in the Euclidean domain $\Omega$.
\begin{theorem}\label{thm1}
Assume that $T>\frac{1}{\sqrt{b}} \operatorname{diam}(\Omega)$ and fixed zero initial values $(v_0,v_1,v_2)=(0,0,0)$. We also assume that $\alpha,b,c$ in \eqref{JMGT} are constants. Let $\Lambda^\textrm{K}_{\beta_1,\kappa_1}$ and $\Lambda^\textrm{K}_{\beta_2,\kappa_2}$ denote the corresponding DtN maps for equation \eqref{JMGT} with coefficients $(\beta_1,\kappa_1),(\beta_2,\kappa_2) \in C^{m}(\overline\Omega) \times C^{m}(\overline\Omega)$ and assume that 
\begin{equation}
\beta_j=\kappa_j=0 \quad \text{on } \partial \Omega, \quad j=1,2.
\end{equation}
If there exists $\delta>0$ such that
\begin{equation*}
\Lambda^\textrm{K}_{\beta_1,\kappa_1}(f)=\Lambda^\textrm{K}_{\beta_2,\kappa_2}(f) \qquad 
\textrm{for all } f \in A_{m} \ \textrm{with } (0,0,0,f) \in \mathcal{N}_{\delta},
\end{equation*}
then one has
\begin{equation}
\beta_1=\beta_2,\ \kappa_1=\kappa_2   \quad \text{in } \Omega.
\end{equation}
\end{theorem}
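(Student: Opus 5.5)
We argue by second order linearization. Take boundary data $f=\varepsilon_1 f_1+\varepsilon_2 f_2$ with $f_j$ smooth, compatible, and vanishing near $t=0$, and let $u_\varepsilon$ denote the corresponding solution of \eqref{JMGT} given by Proposition \ref{propo1}. By the estimate \eqref{estimate3} and the algebra property of $E^{m+2}$, the map $\varepsilon\mapsto u_\varepsilon$ is smooth near $0$.

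\emph{First order.} The function $v_j=\partial_{\varepsilon_j}u_\varepsilon|_{\varepsilon=0}$ solves the linear third order problem
\[
Pv:=\partial_t^3 v+\alpha\partial_t^2 v-b\Delta\partial_t v-c^2\Delta v=0,
\]
with $v_j=f_j$ on $\Gamma$ and zero initial data.

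\emph{Second order.} The mixed derivative $w=\partial_{\varepsilon_1}\partial_{\varepsilon_2}u_\varepsilon|_{\varepsilon=0}$ solves
\[
Pw=2\partial_t\bigl(\beta\,\partial_t v_1\partial_t v_2+\kappa\,\nabla v_1\cdot\nabla v_2\bigr),
\]
with $w=0$ on $\Gamma$ and zero initial data. Write $\beta=\beta_1-\beta_2$ and $\kappa=\kappa_1-\kappa_2$. Equality of the DtN maps gives equality of the Neumann data $c^2\partial_\nu w+b\partial_\nu\partial_t w$ for the two coefficient pairs. Let $v_0$ solve the adjoint equation
\[
P^*v_0=-\partial_t^3v_0+\alpha\partial_t^2v_0+b\Delta\partial_tv_0-c^2\Delta v_0=0,
\]
with vanishing data at $t=T$ and arbitrary Dirichlet data. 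Since the coefficients are constant, Green's formula in $[0,T]\times\Omega$, combined with $w=0$ on $\Gamma$, $\beta_j=\kappa_j=0$ on $\partial\Omega$, and the time cutoffs, yields the integral identity
\[
\int_0^T\!\!\int_\Omega \bigl(\beta\,\partial_tv_1\partial_tv_2+\kappa\,\nabla v_1\cdot\nabla v_2\bigr)\,\partial_t v_0\,dx\,dt=0
\]
for all such triples $(v_1,v_2,v_0)$.

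\emph{Geometric optics.} We need high frequency solutions of the third order operator. The principal symbol is $-i\tau^3+ib\tau|\xi|^2$, up to sign conventions, so the characteristic set is $\tau=0$ or $\tau^2=b|\xi|^2$. The wave speed is therefore $\sqrt b$, which explains the condition $T>\operatorname{diam}(\Omega)/\sqrt b$. The lower order part $\alpha\tau^2+c^2|\xi|^2$ contributes the transport equation; for constant coefficients it gives exponential damping with rate tied to $\gamma/2$ along the ray, since $\alpha\tau^2-c^2|\xi|^2=\tau^2(\alpha-c^2/b)$ on the characteristic set.

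We therefore construct solutions
\[
v=e^{\pm\gamma t/2}\,e^{i\lambda(x\cdot\omega-\sqrt b\,t)}\bigl(a_0+\lambda^{-1}a_1+\cdots\bigr)+R_\lambda,
\]
with the sign chosen so that the weight matches the adjoint equation. The remainder satisfies $\|R_\lambda\|=O(\lambda^{-N})$ in $E^{m+2}$, which follows from energy estimates for the linear problem; these are obtained as in Proposition \ref{propo1}, using $\gamma>0$ for stability. The amplitudes are taken as $a_0=\chi(x\cdot\omega-\sqrt b t)\phi(x')$, which are constant along straight rays in the direction $\omega$ and concentrated near a line. The requirement $T>\operatorname{diam}/\sqrt b$ lets the beam enter and exit within $(0,T)$, so the time cutoffs do not interfere.

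\emph{Choice of phases.} Take $v_1,v_2$ with phases $\lambda(x\cdot\omega-\sqrt b t)$ and $v_0$ with phase $-2\lambda(x\cdot\omega-\sqrt bt)$, so that the three phases cancel. The exponential weights $e^{\pm\gamma t/2}$ from forward and adjoint solutions also cancel, because the adjoint weight has the opposite sign. With this choice,
\[
\partial_tv_1\partial_tv_2\sim -b\lambda^2 a_0^2,\qquad \nabla v_1\cdot\nabla v_2\sim-\lambda^2 a_0^2,\qquad \partial_t v_0\sim 2i\lambda\sqrt b\,a_0 .
\]
The leading term at order $\lambda^3$ is
\[
\int (b\beta+\kappa)\,a_0^3\,dx\,dt .
\]
Concentrating $\phi$ on lines and varying $\omega$ shows that the X-ray transform of $b\beta+\kappa$ vanishes, hence $b\beta+\kappa=0$.

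\emph{Separating $\beta$ and $\kappa$.} This step is the main obstacle: colinear waves only detect $b\beta+\kappa$. I would use two non-parallel directions instead. Take $v_1$ with direction $\omega_1$, $v_2$ with direction $\omega_2$, and $v_0$ with $\xi_0=-(\omega_1+\omega_2)$. The resulting frequency $\tau=2\sqrt b$ is not characteristic for $|\xi_0|<2$, so the product term does not propagate. The alternative is to exploit the $\tau=0$ branch or to use Gaussian beams intersecting at a point, producing the pointwise quantity $b\beta-\kappa\,\omega_1\cdot\omega_2\cdot b\cdots$.

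Concretely, I would intersect two Gaussian beams $v_1,v_2$ transversally at a point $x_0$ and take $v_0$ as a non-oscillatory or low frequency adjoint solution, such as an exponentially weighted smooth solution. Stationary phase then recovers
\[
\bigl(b\beta+\kappa\cos\theta\bigr)(x_0),
\]
where $\theta$ is the angle between the beams; the factor $\cos\theta$ arises from $\nabla v_1\cdot\nabla v_2$. Choosing two angles separates $\beta$ from $\kappa$. Since $b\beta+\kappa=0$ is already known, one extra angle suffices, giving $\beta=\kappa=0$, i.e. $\beta_1=\beta_2$ and $\kappa_1=\kappa_2$.

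The main technical checks are the remainder estimates for the third order operator and controlling the decay of $v_0$ so that the limit is nonzero.
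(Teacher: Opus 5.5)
Your linearization, integral identity and collinear geometric-optics step match the paper and correctly give $b\beta+\kappa=0$. One small correction: the damping factors do not cancel. The two forward amplitudes carry $e^{-\gamma t/2}$ each and the adjoint one carries $e^{\gamma t/2}$, so the product is $e^{-\gamma t/2}\phi^3$. What vanishes is therefore an attenuated X-ray transform with constant attenuation. That transform is still injective (e.g.\ by Novikov's result), so this step survives.

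\textbf{The gap: separating $\beta$ from $\kappa$.} Your concrete plan pairs two transversal beams $v_1,v_2$ meeting at $z_0=(t_0,x_0)$ with a non-oscillatory or low-frequency $v_0$. This produces no leading term at all.
\begin{itemize}
\item The product $v_1v_2$ carries the phase $\varphi_1+\varphi_2$, or $\varphi_1-\overline{\varphi_2}$ if you conjugate one beam. Its differential at $z_0$ is the sum or difference of two non-parallel characteristic covectors, hence a nonzero real covector. Its imaginary part is nonnegative and vanishes only at $z_0$.
\item So there is no critical point. By non-stationary phase the integral is $O(\lambda^{-\infty})$, and no value $(b\beta+\kappa\cos\theta)(x_0)$ can be read off.
\item A nonzero contribution requires $v_0$ to oscillate with the opposite covector, which must itself be characteristic for $P^*$. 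On the cone $\tau^2=b|\xi|^2$, three nonzero covectors summing to zero are necessarily collinear.
\item Your combination $b\beta+\kappa\cos\theta$ does arise if $v_2$ has the opposite time frequency. But then $v_1v_2\sim e^{i\lambda x\cdot(\omega_1-\omega_2)}$, so $v_0$ would have to be a high-frequency solution on the branch $\tau=0$.
\item That branch is inaccessible from the boundary. Its bicharacteristics are the vertical lines $x=\mathrm{const}$, and the leading transport equation there is $b\partial_t a=c^2 a$ for $P^*$ (respectively $b\partial_t a=-c^2 a$ for $P$). The amplitude therefore cannot vanish at $t=T$ (respectively $t=0$).
\end{itemize}
Second-order linearization thus confines you to collinear waves. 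Your assertion that collinear waves only see $b\beta+\kappa$ is exactly where the argument breaks.

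\textbf{The missing idea.} Go one order further in $\lambda$ in the collinear expansion; this is where the damping enters.
\begin{itemize}
\item In the paper, once $\beta+\kappa/b=0$ is known, the $\lambda^2$ coefficient of the same identity reduces to $2\int_Q \beta\,\partial_t(a_{1,0}a_{2,0})\,a_{0,0}+\kappa b^{-1/2}\,w\cdot\nabla(a_{1,0}a_{2,0})\,a_{0,0}\,dx\,dt$. All terms containing the correctors $a_{j,1}$ carry the factor $\beta+\kappa/b$ and drop out.
\item Take $a_{j,0}=e^{-\gamma t/2}\phi_\epsilon(x+\sqrt b\,tw-p)$ and $a_{0,0}=e^{\gamma t/2}\phi_\epsilon(x+\sqrt b\,tw-p)$. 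The time derivative falling on $e^{-\gamma t}$ produces $-\gamma\beta\, e^{-\gamma t/2}\phi_\epsilon^3$.
\item The remaining derivatives combine into $2\sqrt b\,(\beta+\kappa/b)\,e^{-\gamma t/2}\phi_\epsilon^2\, w\cdot\nabla\phi_\epsilon$, which vanishes by the first step.
\end{itemize}
Hence the attenuated X-ray transform of $\gamma\beta$ vanishes. Since $\gamma>0$, this gives $\beta=0$, and then $\kappa=0$. Note that $\gamma\neq 0$ is used essentially here, not merely for stability of the energy estimates as in your sketch.
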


For the non-Euclidean cases, we impose the following geometric assumptions. We first assume $\dim M=3$, $\alpha,b,c \in C^{\infty}(M)$ and 
\[  \operatorname{diam}_{b^{-1}g}(M):= \sup  \left\{ \text{lengths of all geodesics in } (M,b^{-1}g)\right\}<\infty  .  \]
We further impose the following geometric assumption. 

\textbf{Geometric Assumption}: Assume that $\partial M$ is strictly convex with respect to $b^{-1}g$ and  $(M, b^{-1}g)$ satisfies the foliation condition, that is, there exists a smooth strictly convex function $f:M \to \mathbb{R}$, as introduced in \cite{uhlmann_inverse_2016}.

For Westervelt-type nonlinearity, we have
\begin{theorem} \label{thm2}
Assume that $T> \operatorname{diam}_{b^{-1}g }M$ and the above geometric assumption holds. Let $\Lambda^\textrm{W}_{\beta_1}$ and $\Lambda^\textrm{W}_{\beta_2}$ denote the DtN maps for equation \eqref{JMGT2} with coefficients $\beta_1,\beta_2 \in C^{m}(\overline M)$ respectively and assume that 
\begin{equation}
\beta_j=0 \quad \text{on } \partial M, \quad j=1,2.
\end{equation}
If there exists $\delta>0$ such that
\begin{equation*}
\Lambda^{\mathrm W}_{\beta_1}(f)=\Lambda^{\mathrm W}_{\beta_2}(f)
\qquad
 \textrm{for all } f \in A_m \textrm{ with } (0,0,0,f) \in \mathcal{N}_{\delta}
 \end{equation*}
then one has 
\begin{equation}
\beta_1=\beta_2  \quad \text{in } M.
\end{equation}
\end{theorem}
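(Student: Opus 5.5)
We would prove Theorem \ref{thm2} by second order linearization, Gaussian beams for the linearized JMGT operator, and reduction to a geodesic ray transform on $(M,b^{-1}g)$.

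\textbf{Step 1: linearization.} Take $f=\epsilon_1 f_1+\epsilon_2 f_2$ with zero initial data. By Proposition \ref{propo2} the solution $u_\epsilon$ is smooth in $\epsilon$ in $E^{m+2}$. Its first derivatives $v_j=\partial_{\epsilon_j}u_\epsilon|_{\epsilon=0}$ solve the linear equation
\[
P v:=\partial_t^3v+\alpha\partial_t^2v-b\Delta_g\partial_tv-c^2\Delta_gv=0,\qquad v_j|_\Gamma=f_j,
\]
with zero initial data, and these $v_j$ do not depend on $\beta$. The mixed derivative $w=\partial_{\epsilon_1}\partial_{\epsilon_2}u_\epsilon|_{\epsilon=0}$ solves
\[
Pw=2\partial_t^2(\beta v_1v_2),\qquad w|_\Gamma=0,
\]
with zero initial data. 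Equality of the DtN maps implies that the Neumann data of $w^{(1)}$ and $w^{(2)}$ coincide. Let $v_0$ solve the formal adjoint problem $P^*v_0=-\partial_t^3v_0+\partial_t^2(\alpha v_0)+b\Delta_g\partial_tv_0-c^2\Delta_gv_0=0$ with vanishing data at $t=T$. Since $\beta_j$ vanishes on $\partial M$ (so the boundary terms from the source drop out), integration by parts gives the integral identity
\[
\int_{\mathcal M}(\beta_1-\beta_2)\,v_1v_2\,\partial_t^2v_0\,dV_g\,dt=0 .
\]
We then extend the identity from small boundary data to general solutions by density and by the linear well-posedness.

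\textbf{Step 2: Gaussian beams.} The principal part of $P$ is $\partial_t(\partial_t^2-b\Delta_g)$. The characteristic set therefore consists of $\tau=0$, which is non-propagating, and the cone of $\partial_t^2-b\Delta_g$, whose bicharacteristics project to unit-speed geodesics of $\tilde g=b^{-1}g$. We construct approximate solutions
\[
v=e^{i\rho\varphi}(a_0+\rho^{-1}a_1+\dots)+r_\rho,
\]
concentrated near a null geodesic $(t,\gamma(t))$ of $-dt^2+\tilde g$ that runs from boundary to boundary within time $T>\operatorname{diam}_{b^{-1}g}M$. The phase $\varphi$ satisfies the eikonal equation for $\partial_t^2-b\Delta_g$, with complex Hessian of positive imaginary part transversally. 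Inserting the ansatz, the leading term gives $(i\rho\partial_t\varphi)\cdot(\text{wave eikonal})$. The next order gives a first-order transport equation along $\gamma$, in which the damping terms $\alpha\partial_t^2$ and $c^2\Delta_g$ appear only as zeroth order terms in the amplitude, producing an explicit exponential factor. The remainder is controlled by energy estimates for $P$, giving $\|r_\rho\|\lesssim\rho^{-K}$.

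We choose
\[
v_1\sim e^{i\rho\varphi}a,\qquad v_2\sim e^{i\rho\varphi}a,\qquad v_0\sim e^{-2i\rho\bar\varphi}\cdots
\]
along the same geodesic. The phases then cancel, $\partial_t^2 v_0$ contributes a factor $\rho^2(\partial_t\varphi)^2$, and the Gaussian profiles combine to $|\cdot|^2$. With the standard normalization (multiplying by $\rho^{(n-1)/2-2}$), the limit $\rho\to\infty$ yields
\[
\int_\gamma(\beta_1-\beta_2)(\gamma(s))\,\Theta(s)\,ds=0,
\]
where $\Theta$ is a nonvanishing, explicitly computed weight built from the exponential attenuation factors. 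We also use a time-translation freedom and the choice of amplitudes to remove the dependence of $\Theta$, or to make it a known positive function.

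\textbf{Step 3: inversion and main obstacle.} The conclusion is an attenuated (weighted) geodesic ray transform of $\beta_1-\beta_2$ vanishing on all maximal $\tilde g$-geodesics. In dimension 3, under strict convexity and the foliation condition, the local-to-global injectivity of Uhlmann--Vasy \cite{uhlmann_inverse_2016}, which also covers generic smooth weights/attenuations, gives $\beta_1=\beta_2$. The main obstacle is Step 2, specifically the Gaussian beam construction for the third-order operator. Three points need care there: the factorization, so that the beam lives on the wave cone rather than the $\tau=0$ sheet; solving the transport equations with the variable lower-order terms; and the remainder estimates via the linear JMGT energy estimates, where the damping and $b>0$ are used. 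Arranging the weight $\Theta$ into a form covered by the known injectivity results is the secondary difficulty. I would first try to choose $v_0$'s amplitude as the reciprocal attenuation, so that $\Theta$ becomes a known positive factor.
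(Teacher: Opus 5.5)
Your overall route is the paper's: second order linearization, $v_1=v_2$ a forward Gaussian beam, $v_0$ a backward beam with phase $-2\bar\varphi$ on the same null geodesic, stationary phase in $z'$, and reduction to a weighted ray transform on $(M,b^{-1}g)$. Moving both time derivatives onto $v_0$ instead of one is harmless. The gap is in the weight $\Theta$, and therefore in the inversion step.

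The principal amplitudes on $\varsigma$ are not free. The transport equations force, up to multiplicative constants, $a_{0,0}=(\sqrt b\,\det Y)^{-1/2}e^{-\frac{\sqrt2}{4}\int\gamma}$ for the forward beam and $(b^{5/2}\det Y)^{-1/2}e^{\frac{\sqrt2}{4}\int\gamma}$ for the backward beam. Here $H=ZY^{-1}$ is the transversal Hessian of $\varphi$. The triple product therefore carries $(\det Y)^{-1}(\overline{\det Y})^{-1/2}$, while stationary phase returns only $(\det\Im H)^{-1/2}=c\,|\det Y|$ by \eqref{station}. What you actually obtain is \eqref{jacobi}, $\int_\sigma\beta\, b^{-1/4}e^{-\frac{\sqrt2}{4}\int\gamma}(\det Y)^{-1/2}\,d\tau=0$, not a weight built only from attenuation factors.

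This Jacobi factor cannot be removed by time translation or by the free constants in the amplitudes. Since $\frac{d}{d\tau}\log\det Y=\operatorname{tr}(CH)=2(H_{22}+H_{33})$ and $\Im H>0$, the argument of $\det Y$ is strictly increasing along $\sigma$. So $\Theta$ is genuinely complex with a rotating phase. Its modulus also depends, through the Riccati equation, on the curvature along the whole geodesic and on the beam data $(Y_0,H_0)$. Your plan to make $\Theta$ a known positive factor therefore cannot work.

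For the same reason, quoting Uhlmann--Vasy \cite{uhlmann_inverse_2016} for ``generic smooth weights'' does not close the argument. Their local-to-global method handles a fixed smooth nonvanishing weight on $SM$. Here the weight is defined geodesic by geodesic, by integrating the Jacobi/Riccati system from chosen initial data. It is not a priori such a function on $SM$, in particular near the glancing region where the short geodesics of the local argument live. This is a Jacobi-weighted ray transform.

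The paper closes the proof by appealing to the injectivity of geodesic ray transforms with (matrix) weights on manifolds with strictly convex boundary satisfying the foliation condition, due to Paternain--Salo--Uhlmann--Zhou \cite{paternain_geodesic_2019}. This is where the Geometric Assumption enters. To finish, you need that result for the Jacobi weight, or an argument that a consistent choice of beam data turns $(\det Y)^{-1/2}$ into a smooth nonvanishing weight on $SM$ to which a Uhlmann--Vasy type argument applies.

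Two smaller slips. First, for variable $b,c$ your $P^*$ is not the formal adjoint. It should be $-\partial_t^3v_0+\alpha\partial_t^2v_0+\Delta_g(b\partial_tv_0)-\Delta_g(c^2v_0)$ as in \eqref{backward5.1}; otherwise the integral identity picks up extra terms. Second, if your $n$ denotes $\dim M=3$, the normalizing power should be $\rho^{-1/2}$, since $z'=(r,z^2,z^3)$ has three components and the two time derivatives contribute $\rho^2$.
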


For Kuznetsov-type nonlinearity, we have
\begin{theorem} \label{thm3}
Assume that $T> \operatorname{diam}_{b^{-1}g }M$ and the above geometric assumption holds. Let $\Lambda^\textrm{K}_{\beta_1,\kappa_1}$ and $\Lambda^\textrm{K}_{\beta_2,\kappa_2}$ denote the corresponding DtN maps for equation \eqref{JMGT} with coefficients $(\beta_1,\kappa_1),(\beta_2,\kappa_2) \in C^{m}(\overline M) \times C^{m}(\overline M)$ and assume that 
\begin{equation}
\beta_j=\kappa_j=0 \quad \text{on } \partial M, \quad j=1,2.
\end{equation}
If there exists $\delta>0$ such that
\begin{equation*}
\Lambda^\textrm{K}_{\beta_1,\kappa_1}(f)=\Lambda^\textrm{K}_{\beta_2,\kappa_2}(f) \qquad \textrm{for all } f \in A_{m}  \ \textrm{with } (0,0,0,f) \in \mathcal{N}_{\delta},
\end{equation*}
then one has 
\begin{equation}
\beta_1=\beta_2,\ \kappa_1=\kappa_2   \quad \textrm{in } M .
\end{equation}
\end{theorem}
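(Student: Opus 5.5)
We will prove Theorem \ref{thm3} by second order linearization combined with Gaussian beam solutions, reducing to the injectivity of the geodesic ray transform on $(M,b^{-1}g)$ guaranteed by the foliation condition \cite{uhlmann_inverse_2016}.

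\textbf{Step 1: second order linearization.} For $f=\varepsilon_1 f_1+\varepsilon_2 f_2$ with $\varepsilon_j$ small, Proposition \ref{propo1} gives solutions $u_j^{\varepsilon}$ for $(\beta_j,\kappa_j)$. Differentiating in $\varepsilon$ at $0$, the first order terms $v_l=\partial_{\varepsilon_l}u|_{\varepsilon=0}$ solve the linear JMGT equation
\[
P v:=\partial_t^3 v+\alpha\partial_t^2 v-b\Delta_g\partial_t v-c^2\Delta_g v=0,\qquad v|_\Gamma=f_l,
\]
with zero initial data, and these are independent of $j$. The mixed derivative $w_j=\partial^2_{\varepsilon_1\varepsilon_2}u_j|_{\varepsilon=0}$ solves
\[
Pw_j=2\partial_t\bigl(\beta_j\,\partial_t v_1\partial_t v_2+\kappa_j\langle\nabla_g v_1,\nabla_g v_2\rangle_g\bigr)
\]
with zero boundary and initial data. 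Equality of the DtN maps gives $c^2\partial_\nu w_1+b\partial_\nu\partial_t w_1=c^2\partial_\nu w_2+b\partial_\nu\partial_t w_2$ on $\Gamma$. We take $v_0$ solving the formal adjoint problem $P^*v_0=0$, i.e.
\[
-\partial_t^3v_0+\partial_t^2(\alpha v_0)+b\Delta_g\partial_t v_0-c^2\Delta_g v_0=0,
\]
with vanishing data at $t=T$, and integrate by parts. The boundary terms vanish because $w=w_1-w_2$ has zero Dirichlet data and the combination $c^2\partial_\nu w+b\partial_\nu\partial_t w$ vanishes; the coefficients may need to be kept inside the Laplacian appropriately for the boundary terms to pair correctly. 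This yields the integral identity
\[
\int_{\mathcal M}\partial_t v_0\Bigl((\beta_1-\beta_2)\partial_t v_1\partial_t v_2+(\kappa_1-\kappa_2)\langle\nabla_g v_1,\nabla_g v_2\rangle_g\Bigr)\,dV\,dt=0 .
\]

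\textbf{Step 2: Gaussian beams.} The principal part of $P$ is $\partial_t(\partial_t^2-b\Delta_g)$, so high frequency waves travel along null geodesics of $-dt^2+b^{-1}g$. For a maximal geodesic $\gamma$ of $(M,b^{-1}g)$ of length $L<T$, which is nontrapping by strict convexity and finite diameter, we construct approximate solutions
\[
v=e^{i\rho\varphi}\bigl(a_0+\rho^{-1}a_1+\cdots\bigr)+R_\rho,
\]
with $\varphi$ complex, $\operatorname{Im}\varphi\ge0$ vanishing exactly on $\{(t,\gamma(t))\}$, and the eikonal equation $\partial_t\varphi\,(\partial_t\varphi^2-b|\nabla\varphi|_g^2)=0$ solved on the cone factor to infinite order on $\gamma$. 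The transport equation comes from the subprincipal part. The lower order terms $\alpha\partial_t^2-c^2\Delta_g$ produce an exponential damping factor along the ray, involving $\gamma=\alpha-c^2/b$. The remainder $R_\rho$ is controlled by energy estimates for the linear JMGT equation, giving $\|R_\rho\|=O(\rho^{-K})$ in $E^{m+2}$. Timing the beams to start near $\partial M$ after $t=0$ lets them satisfy zero initial data and the compatibility conditions; this is why we need $T>\operatorname{diam}$.

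\textbf{Step 3: recovering $\beta$ and $\kappa$.} We choose phases so that the triple product is non-oscillatory: $v_1$ and $v_2$ have frequencies $\rho$ along $\gamma$ (one forward, or with complementary phases), and $v_0$ is the adjoint beam with the conjugate phase. Since each factor carries a derivative, the leading term is of order $\rho^3$, times $\rho^{-(n-1)/2}$ from the Gaussian concentration. For Westervelt-type nonlinearity the symbol is $\partial_t^2(\beta\,\cdot)$, and the leading term is the weighted integral of $\beta_1-\beta_2$ along $\gamma$. For the Kuznetsov model the leading coefficient is
\[
(\beta_1-\beta_2)(\partial_t\varphi)^2+(\kappa_1-\kappa_2)\langle\nabla\varphi,\nabla\varphi\rangle_g
=(\partial_t\varphi)^2\bigl[(\beta_1-\beta_2)+(\kappa_1-\kappa_2)b^{-1}\bigr],
\]
which only gives a combination. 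To separate the two coefficients we use two beams with opposite directions along $\gamma$ (a $+$ and a $-$ time-frequency branch). The sign of $\langle\nabla\varphi_1,\nabla\varphi_2\rangle$ then flips relative to $\partial_t\varphi_1\partial_t\varphi_2$, or alternatively we pair waves whose temporal frequencies have opposite signs. This gives a second independent combination, $\beta_1-\beta_2-b^{-1}(\kappa_1-\kappa_2)$, with the exact weights set by the phase choice. Letting $\rho\to\infty$, the weighted ray transforms of both combinations vanish. The damping weight is explicit, nonvanishing and depends only on known quantities, so we use the attenuated or weighted form of the injectivity result of \cite{uhlmann_inverse_2016} under the foliation assumption (this is also where $\dim M=3$ enters). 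This gives both combinations $=0$, hence $\beta_1=\beta_2$ and $\kappa_1=\kappa_2$.

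\textbf{Main obstacle.} I expect the hardest step to be the Gaussian beam construction for the third order operator together with remainder estimates via the linear well-posedness. A related difficulty is arranging the frequency and direction choices so that the Kuznetsov symbol yields two linearly independent combinations of $\beta$ and $\kappa$, rather than a degenerate one. If opposite directions degenerate, I would instead try beams intersecting transversally at a point and use their interaction angle as the second parameter.
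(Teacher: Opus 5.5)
Your Steps 1 and 2 match the paper: second order linearization to \eqref{identity5.2}, and Gaussian beams along null geodesics of $-dt^2+b^{-1}g$. You also correctly see that the leading term only detects $\beta+\kappa/b$. The gap is in Step 3: the geometric mechanism you propose for producing a second combination $\beta-\kappa/b$ cannot work.

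The trilinear identity is non-negligible only if the phase covectors of $v_0,v_1,v_2$ sum to zero on the interaction region and each one is characteristic. For null covectors $\xi_1,\xi_2$ of $\bar g$, $\xi_1+\xi_2$ is null iff $\bar g(\xi_1,\xi_2)=0$ iff $\xi_1\parallel\xi_2$. The other characteristic branch $\{\tau=0\}$ of $\partial_t(\partial_t^2-b\Delta_g)$ has bicharacteristics $x=\mathrm{const}$. With zero data at $t=0$ (resp.\ $t=T$), that branch therefore carries nothing at interior points. Concretely:
\begin{itemize}
\item Opposite spatial directions with the same sign of $\partial_t\varphi$, i.e.\ $\xi_1=(\tau_0,\eta)$, $\xi_2=(\tau_0,-\eta)$, give the sum $(2\tau_0,0)$. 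This is noncharacteristic, so the pairing with any adjoint solution $v_0$ is $O(\rho^{-\infty})$ by microlocal ellipticity.
\item Opposite signs, $\xi_2=(-\tau_0,\eta)$, give $(0,2\eta)$, which lies on the unreachable $\tau=0$ branch.
\item In either case the two beams lie on different null geodesics meeting at a single spacetime point, so no ray transform could arise.
\item Your fallback of transversal beams fails for the same reason.
\end{itemize}
The only admissible configuration is the collinear one, $d\varphi_j=\lambda_j(\tau_0,\eta)$ with $\tau_0^2=b|\eta|_g^2$. There $\langle\nabla\varphi_1,\nabla\varphi_2\rangle_g=b^{-1}\partial_t\varphi_1\partial_t\varphi_2$ for every choice of signs and sizes of the $\lambda_j$. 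So every leading-order identity involves only $\beta+\kappa/b$; flipping orientations or temporal frequencies never yields an independent combination.

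The missing idea is to separate the coefficients at the next order of the asymptotic expansion, not through the geometry.
\begin{itemize}
\item The paper first uses the leading ($\rho^{3/2}$) term to get $\beta+\kappa/b=0$.
\item Inserting this into the $\rho^{1/2}$ term removes every contribution of the correctors $a_1,\bar a_1^{(0)}$ and of $\partial_t\bar a_0^{(0)}$.
\item What remains, $\beta\,\partial_t\varphi\,\partial_t a_0+\kappa\langle d\varphi,da_0\rangle_g$, becomes $-\beta\langle d\varphi,da_0\rangle_{\bar g}$, which equals $-\beta\,\partial_\tau a_0$ on $\varsigma$.
\item By the transport equation, $\partial_\tau a_{0,0}=-\frac12\bigl(\partial_\tau\log(\sqrt b\det Y)+\frac{\sqrt2}{2}\gamma\bigr)a_{0,0}$. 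So this is a weighted line integral of $\beta$ alone.
\end{itemize}
In the Euclidean proof of Theorem \ref{thm1} this is exactly the term $-2\gamma\int e^{-\gamma t/2}\beta\phi_\epsilon^3$, which is where the damping $\gamma$ enters. The paper then uses $\beta=0$ on $\partial M$ to integrate by parts and invokes injectivity of the weighted ray transform, after which $\kappa=-b\beta=0$.
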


\subsection{Previous Literature and Related Works}
Inverse problems for nonlinear hyperbolic equations have been extensively studied since the pioneering work \cite{kurylev_inverse_2018}. The paper \cite{kurylev_inverse_2018} observed that nonlinearity can be used as a powerful tool in inverse problems for nonlinear wave equations and showed that local measurements for the scalar wave equation with a quadratic nonlinearity determine the conformal class of a globally hyperbolic four-dimensional Lorentzian manifold. Their approach, now known as the higher order linearization method, was extended to elliptic equations in \cite{feizmohammadi_inverse_2020,lassas_inverse_2021} for full data and in \cite{krupchyk_remark_2019,lassas_partial_2020} for partial data. For inverse boundary value problems in the hyperbolic setting, the method was further developed in \cite{feizmohammadi_recovery_2022,hintz_dirichlet--neumann_2022,hintz_inverse_2022,lassas_gaussian_2025,lassas_stability_2025,lassas_uniqueness_2022,oksanen_inverse_2024,qiu_uniqueness_2025,uhlmann_inverse_2021,wang_inverse_2019}. We refer the reader to \cite{oksanen_inverse_2024} for the unified approach treating general real principal type differential operators.

We also note that inverse problems arising in nonlinear acoustics, such as ultrasound imaging and infrasonic wave propagation, have been considered in recent years. The work \cite{acosta_nonlinear_2022} showed that the DtN map uniquely determines the nonlinearity coefficient in the Westervelt equation without damping arising in ultrasound imaging. Further related studies on inverse problems for the Westervelt equation include \cite{acosta_simultaneous_2025,Kaltenbacher_2023,li_inverse_2024,uhlmann_inverse_2023}. Furthermore, H\"older stability was proved recently in \cite{wendels_stable_2025}. For the nonlinear progressive wave equation arising in infrasonic wave propagation, the determination of multiple unknowns has been studied in both Minkowski space and Lorentzian manifolds in \cite{jiang_inverse_2025,jiang_inverse_2025-1}.

Among these, the work most closely related to ours is \cite{fu_inverse_2024}, where the authors studied the well-posedness for small initial and boundary data and the unique recovery of single Westervelt-type nonlinear coefficient from all boundary measurements or the input-output map in Minkowski space. They also obtained logarithmic stability in the corresponding setting in \cite{fu_stability_2025-1}. The JMGT equation, arising in wave propagation in viscous thermally relaxing fluids, has recently received significant attention in the literature on inverse problems. The paper \cite{kaltenbacher_acoustic_2025} proves local uniqueness of the spatial acoustic nonlinearity parameter in the JMGT frequency-domain model from boundary measurements and outlines a convergent regularized Newton reconstruction. We refer the reader to \cite{fu_partial_2026,fu_stability_2025,fu_calderon_2025,kaltenbacher_imaging_2025} for some recent works on inverse problems for the JMGT and MGT equations. 

In contrast to \cite{fu_inverse_2024}, we consider inverse problems for a more involved model with two unknown nonlinear coefficients and also extend the analysis to compact three-dimensional Riemannian manifolds under suitable geometric assumptions. We show that the associated Dirichlet-to-Neumann map uniquely determines these coefficients both in Euclidean domains and on compact Riemannian manifolds. Our approach combines the second order linearization method with the construction of geometric optics solutions in the Euclidean setting and Gaussian beam solutions on manifolds, reducing the inverse problem to the injectivity of certain geodesic ray transforms. To the best of our knowledge, this appears to be the first uniqueness result for recovering both quadratic nonlinear coefficients in the JMGT equation from the Dirichlet-to-Neumann map.

The paper is organized as follows. In Section \ref{wellposedness}, we establish the local well-posedness for the forward problem. Section \ref{Euclidean} presents the construction of geometric optics solutions of the linear MGT equation and the proof of Theorem \ref{thm1} in the Euclidean setting. In Section \ref{Gauss}, we construct the Gaussian beam solution for the linear MGT equation in the geometric setting. Section \ref{geometric} is devoted to the proofs of Theorem \ref{thm2} and Theorem \ref{thm3} under the Geometric Assumption. Section \ref{conclusion} contains the conclusion and future work.

\section{Local Well-Posedness of JMGT equation} \label{wellposedness}


The aim of this section is to prove Proposition \ref{propo1}, which establishes the well-posedness of the initial boundary value problem \eqref{JMGT} for small initial and boundary data. 

We begin with the following linear MGT equation:
\begin{equation}\label{MGT}
\left\{
\begin{aligned}
&\partial_t^3v + \alpha \partial_{t}^2 v  - b\Delta_g \partial_t v-c^2\Delta_g v = F(t,x)
    && \text{in } \mathcal{M}, \\
&v= h(t,x)
    && \text{on } \Gamma, \\
&v(0,x)=v_0(x), \partial_t v(0,x) = v_1(x), \partial_t^2 v(0,x)=v_2(x) 
    && \text{in }  M.
\end{aligned}
\right.
\end{equation}
We introduce the following MGT  equation compatibility conditions on the boundary data and initial data 
 $(v_0,v_1,v_2,h)$ in \eqref{MGT}.
\begin{definition}
On the boundary $\partial M$, if the following relations hold
\begin{equation}\label{compa}
\left\{
\begin{aligned}
&\partial_t^kh(0,x)=v_k(x)
     \quad  \text{on } \partial M \text{ for } k=0,1,2 ,\\
&\partial_t^{k}h(0,x)=-\alpha \partial_t^{k-1}v(0,x)+b\Delta_g \partial_t^{k-2}v(0,x)+c^2\Delta_g \partial_t^{k-3}v(0,x)+\partial_t^{k-3} F(0,x) \\
    & \quad \quad  \quad  \quad  \quad  \quad  \quad  \quad  \quad   \quad  \text{on } \partial M \text{ for } 3 \leq k \leq m+1, \\
\end{aligned}
\right.
\end{equation}
the quadruple $(v_0,v_1,v_2,h)$ is said to satisfy the MGT equation compatibility conditions up to order $m+1$.
\end{definition}

\begin{lemma} \label{2}
Suppose that $F \in E^m$ and $(v_0,v_1,v_2,h) \in H^{m+2}(M)\times H^{m+1}(M)\times H^m(M) \times A_m$ satisfy the compatibility conditions    \eqref{compa}      up to order $m+1$. Then linear system \eqref{MGT} admits a unique solution $v \in E^{m+2}$ such that $\partial_{\nu}v \in N_{m+1}$ and
\begin{align*}
& \left\|v \right\|_{E^{m+2}}+\left\| \partial_{\nu}v  \right\|_{N_{m+1}}\leq \\ &C   
\left( \left\|v_0\right\|_{H^{m+2}(M)}+ \left\|v_1\right\|_{H^{m+1}(M)}+\left\|v_2\right\|_{H^{m}(M)}+\left\|F\right\|_{E^m}+\left\|h\right\|_{H^{m+2}(\Gamma)}   \right).
\end{align*}
\end{lemma}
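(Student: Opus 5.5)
The plan is to reduce the third order MGT problem \eqref{MGT} to a second order damped wave equation coupled with a first order ODE in time, and then invoke the classical energy and regularity theory for linear hyperbolic initial boundary value problems (e.g. the results used in \cite{choquet-bruhat_general_2009} and in works on the Westervelt equation such as \cite{acosta_nonlinear_2022,fu_inverse_2024}).

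Set $w:=\partial_t v+\frac{c^2}{b}v$. Since $b>0$, the equation in \eqref{MGT} becomes
\begin{equation*}
\partial_t^2 w-b\Delta_g w+\gamma\,\partial_t w=F+\frac{c^2}{b}\gamma\,\partial_t v-\frac{c^2}{b}\Bigl(\frac{c^2}{b}\Bigr)'_{t}v,
\end{equation*}
where $\gamma=\alpha-c^2/b$. The last term vanishes because the coefficients do not depend on $t$. To simplify, write the right-hand side in terms of $w$ and $v$ using $\partial_t v=w-\frac{c^2}{b}v$. This gives the coupled system
\begin{equation*}
\partial_t^2 w-b\Delta_g w+\gamma\partial_t w=F+\gamma\tfrac{c^2}{b}\bigl(w-\tfrac{c^2}{b}v\bigr),\qquad \partial_t v+\tfrac{c^2}{b}v=w .
\end{equation*}
Here $w$ has Dirichlet data $\partial_t h+\frac{c^2}{b}h$ on $\Gamma$ and initial data $w(0)=v_1+\frac{c^2}{b}v_0$ and $\partial_t w(0)=v_2+\frac{c^2}{b}v_1$. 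The compatibility conditions \eqref{compa} up to order $m+1$ for $(v_0,v_1,v_2,h)$ translate exactly into the standard wave-equation compatibility conditions up to order $m$ for $w$.

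Next I solve the system by a contraction (or Picard) argument in $E^{m+1}$ for $w$. Given $v$, the wave equation for $w$ has a unique solution in $E^{m+1}$. The boundary datum lies in $H^{m+1}(\Gamma)$, with the corresponding $C^k H^{m+1/2-k}(\partial M)$ regularity coming from $h\in A_m$. The solution satisfies $\|w\|_{E^{m+1}}+\|\partial_\nu w\|_{N_m}\le C(\text{data}+\|F\|_{E^m}+\|v\|_{E^m})$; this is the sharp hidden-regularity estimate for second order hyperbolic Dirichlet problems of Lasiecka--Lions--Triggiani type. Then $v$ is recovered from the ODE by Duhamel's formula, $v(t)=e^{-c^2t/b}v_0+\int_0^t e^{-c^2(t-s)/b}w(s)\,ds$. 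Because the lower order coupling is linear, a Gronwall argument on $[0,T]$ closes the estimate and also gives uniqueness.

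The step I expect to be the main obstacle is upgrading the regularity of $v$ from $E^{m+1}$ to $E^{m+2}$, together with showing $\partial_\nu v\in N_{m+1}$. The ODE gives time regularity directly: $\partial_t^{k+1}v$ is controlled by $\partial_t^k w$. It does not give the extra spatial derivative. For that I will use the original equation as an elliptic problem at each time:
\begin{equation*}
-\Delta_g\Bigl(b\,\partial_t v+c^2 v\Bigr)=F-\partial_t^3v-\alpha\partial_t^2v,\qquad b\,\partial_t v+c^2 v=b\,w .
\end{equation*}
So $\Delta_g(bw)$ is controlled in $H^{m-k}$ after $k$ time derivatives. Elliptic regularity for the Dirichlet problem, with boundary data coming from $h\in H^{m+2}(\Gamma)$, then gains the missing derivative on $w$. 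Feeding this back through the ODE yields $\partial_t^k v\in H^{m+2-k}$. The bound on $\partial_\nu v$ in $N_{m+1}$ follows from $\partial_\nu w\in N_m$ by applying the ODE on the boundary, since $\partial_\nu$ commutes with the time ODE. Combining all the estimates gives the stated inequality.
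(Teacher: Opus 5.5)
The reduction to $w=\partial_t v+\frac{c^2}{b}v$ and the Lasiecka--Lions--Triggiani step for $w$ are sound in outline. There is one slip: when $\omega:=c^2/b$ depends on $x$, $b\Delta_g(\omega v)\neq c^2\Delta_g v$. So the $w$-equation also carries $-2b\langle\nabla_g\omega,\nabla_g v\rangle_g-b(\Delta_g\omega)v$ on the right, and $\partial_\nu$ commutes with the ODE only up to $(\partial_\nu\omega)v$. These terms are lower order and harmless, since the ODE already puts $v$ in $E^{m+1}$.

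\textbf{The gap is the upgrade step.} The identity $-\Delta_g(bw)=F-\partial_t^3v-\alpha\partial_t^2v$ does not control $\Delta_g(b\,\partial_t^k w)$ in $H^{m-k}$. Even at the target regularity $v\in E^{m+2}$, $\partial_t^{k+3}v$ lies only in $H^{m-1-k}$. So elliptic regularity returns exactly $\partial_t^k w\in H^{m+1-k}$, which is nothing beyond $w\in E^{m+1}$. No argument can do better: $w(0)=v_1+\frac{c^2}{b}v_0$ with $v_1$ merely in $H^{m+1}(M)$, so in general $w\notin C([0,T];H^{m+2}(M))$. Consequently the Duhamel formula $v(t)=e^{-\omega t}v_0+\int_0^t e^{-\omega(t-s)}w(s)\,ds$ only gives $v\in C([0,T];H^{m+1}(M))$, and the $k=0$ component of $E^{m+2}$ is left unproved. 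The extra derivative must be gained on $v$ itself, and it comes from integrating in time, not from a fixed-time elliptic problem for $w$.

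\textbf{The missing device.} Since $b,c$ do not depend on $t$, $b\Delta_g\partial_t v+c^2\Delta_g v=(\partial_t+\omega)(b\Delta_g v)$. Hence $Y:=b\Delta_g v-\partial_t^2 v$ solves the ODE
\[
\partial_t Y+\omega Y=\gamma\,\partial_t^2 v-F,\qquad Y(0)=b\Delta_g v_0-v_2\in H^m(M).
\]
Your $w$-step already gives $\partial_t^2 v=\partial_t w-\omega\partial_t v\in C([0,T];H^m(M))$. Therefore $Y$, and hence $\Delta_g v=b^{-1}(Y+\partial_t^2 v)$, lies in $C([0,T];H^m(M))$. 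Elliptic regularity with $v|_{\partial M}=h\in C([0,T];H^{m+3/2}(\partial M))$ (part of $A_m$) then gives $v\in C([0,T];H^{m+2}(M))$ with the required bound. After that, your boundary argument for $\partial_\nu v\in N_{m+1}$ goes through.

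This $Y$ is the same mechanism as the first component $y=\mathcal A_{g'}\tilde v+z_t$ of the paper's state vector, which is propagated by an ODE: the first row of $D_0$ contains no spatial operator. The paper gets existence from the group on $\mathcal Z$, and the bounds, including those on $\partial_\nu v$, from direct multiplier identities with $v_{tt}$, $v_t$, $L_g v$ and $Hv$. Your route, once repaired, instead delegates the hyperbolic and hidden-regularity part to known results for the damped wave equation satisfied by $w$.
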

\begin{proof}
This is a generalization of \cite[Theorem 3.1]{fu_inverse_2024}. For brevity, we focus on the case $m=0$ here, while higher-order regularity can be obtained by the bootstrap arguments.

To deal with the non-homogeneous boundary condition, we construct a lifting function $\tilde{h}(t,x)$ as the solution to the wave equation:
\begin{equation}\label{eq:lifting_h}
\left\{
\begin{aligned}
&\partial_t^2 \tilde{h} - b \Delta_g \tilde{h}= 0 \quad &&\text{in } \mathcal{M}, \\
&\tilde{h} = h \quad &&\text{on } \Gamma, \\
&\tilde{h}(0,x) = \tilde{h}_0,\ \partial_t \tilde{h}(0,x) = \tilde{h}_1 \quad&&\text{on } M,
\end{aligned}
\right.
\end{equation}
where $\tilde{h}_0 \in H^2(M)$ and $\tilde{h}_1 \in H^1(M)$ are chosen to satisfy the compatibility conditions on the boundary. Since $h \in A_0 \subset H^2(\Gamma)$, \cite[Theorem 2.2]{lasiecka1986non} guarantees that \eqref{eq:lifting_h} admits a unique solution
\[
\tilde{h} \in C([0,T];H^2(M))\cap C^1([0,T];H^1(M)) \cap C^2([0,T];L^2(M)).
\]
Define $\tilde{v}(t,x) = v(t,x) - \tilde{h}(t,x)$. Substituting $v$ into \eqref{MGT} yields the following problem for $\tilde{v}$:
\begin{equation}\label{MGT2}
\left\{
\begin{aligned}
&\partial_t^3 \tilde{v} + \alpha \partial_t^2 \tilde{v} - b \Delta_g \partial_t \tilde{v} - c^2 \Delta_g \tilde{v} = \tilde{F}(t,x) \quad &&\text{in } \mathcal{M}, \\
&\tilde{v} = 0 \quad &&\text{on } \Gamma, \\
&\tilde{v}(0,x) = \tilde{v}_0,\ \partial_t \tilde{v}(0,x) = \tilde{v}_1,\ \partial_t^2 \tilde{v}(0,x) = \tilde{v}_2 \quad&&\text{on } M,
\end{aligned}
\right.
\end{equation}
where
\[
\begin{aligned}
\tilde{F} &= F - \gamma \partial_t^2 \tilde{h} \in C([0,T];L^2(M)), \\
\tilde{v}_0 &= v_0 - \tilde{h}_0, \quad \tilde{v}_1 = v_1 - \tilde{h}_1, \quad \tilde{v}_2 = v_2 - \partial_t^2 \tilde{h}(0,x).
\end{aligned}
\]
Define the conformal metric $g'=b(x)^{-1}g$. The Laplace–Beltrami operator transforms as
\[
b(x)\Delta_g \tilde{v} = \Delta_{g'} \tilde{v} + P_1\tilde{v}.
\]
where $P_1u=\frac{n-2}{2} \langle\mathrm{d} b, \mathrm{d} u\rangle_g$ is a first-order differential operator. Let $\mathcal{A}_{g'}=-\Delta_{g'}$ denote the positive self-adjoint operator with compact resolvent on $L^2(M,\mathrm{d}V_{g'})$ and has the domain $\mathcal{D}(\mathcal{A}_{g'})=H^2(M)\cap H_0^1(M)$. The fractional power operator $\mathcal{A}_{g'}^{1/2}$ has domain $\mathcal{D}(\mathcal{A}_{g'}^{1/2})=H_0^1(M)$. We consider the high-regularity state space $\mathcal{H}=\mathcal{D}(\mathcal{A}_{g'})\times \mathcal{D}(\mathcal{A}_{g'}^{1/2})\times L^2(M,\mathrm{d}V_{g'})$. Then the MGT equation can be rewritten as:
\begin{equation}\label{eq:MGT}
    \partial_t^3 \tilde{v}+\alpha\partial_t^2 \tilde{v} + \omega(x)\mathcal{A}_{g'}\tilde{v} + \mathcal{A}_{g'}\partial_t \tilde{v} - \omega(x)P_1\tilde{v} - P_1\partial_t\tilde{v}=\tilde{F} \quad \text{in }\mathcal{M}, 
\end{equation}
where $\omega(x)= \frac{c^2(x)}{b(x)}$.\\
We introduce $z= \tilde{v}_t + \omega(x)\tilde{v}$ and $y=\mathcal{A}_{g'}\tilde{v} + z_t$, then construct the new state vector $Y=(y,\mathcal{A}_{g'}^{1/2}z,z_t)^T$ and the product Hilbert space $\mathcal{Z}=L^2(M,\mathrm{d}V_{g'})\times L^2(M,\mathrm{d}V_{g'})\times L^2(M,\mathrm{d}V_{g'})$ with the weighted inner product 
\begin{align*}
    \langle Y,\hat{Y} \rangle_{\mathcal{Z}} = \langle y,\hat{y} \rangle_{L^2(g')} + \langle \mathcal{A}_{g'}^{1/2}z,\mathcal{A}_{g'}^{1/2}\hat{z} \rangle_{L^2(g')} + \langle z_t, \hat{z}_t \rangle_{L^2(g')}.
\end{align*}
A simple calculation shows that \eqref{eq:MGT} can be rewritten as a first-order operator evolution equation $\frac{dY(t)}{dt}=DY(t)+G(t)$ on $\mathcal{Z}$, where $G(t)$ is the source term vector, and the matrix operator $D$ decomposes as $D=D_0+K_1$:
\[
\begin{aligned}
D_0= B_1 +P_B=
\mathcal{A}_{g'}^{1/2}\left(\begin{array}{ccc}0&0&0\\0&0&I\\0&-I&0\end{array}\right) +
\left(
\begin{array}{ccc}
 -\omega(x) I & 0 & \left(\omega(x)-\gamma(x)\right) I \\
 0 & 0 & 0 \\
 0 & 0 & -\gamma(x) I
\end{array}
\right) .
\end{aligned}
\]
Here, $K_1$ collects all the residual terms arising from the lower-order spatial operators, and $\mathcal{D}(D_0) = L^2(M,\mathrm{d}V_{g'})\times \mathcal{D}(\mathcal{A}^{1/2}_{g'}) \times \mathcal{D}(\mathcal{A}^{1/2}_{g'})$.\\
By the same argument as in the proof of \cite[Theorem 1.2]{kaltenbacher2011wellposedness}, the principal part $B_1$ with the domain $\mathcal{D}(B_1)=\mathcal{D}(D_0)$ is a skew-adjoint operator on $\mathcal{Z}$. Since $P_B$ and $K_1$ are bounded on $\mathcal{Z}$, $D=B_1+P_B+K_1$ is a bounded perturbation of $B_1$
 and also generates a strongly continuous group on $\mathcal{Z}$ via the bounded perturbation theorem for semigroups. Since the coefficients $b(x),c(x)>0$ on the compact manifold $M$, the multiplier $\omega$ is strictly positive and bounded away from zero. This guarantees that the transformation from the original state variables $\tilde{V}=(\tilde{v},\tilde{v}_t,\tilde{v}_{tt})^T \in \mathcal{H}$ to $Y\in \mathcal{Z}$ is a continuous isomorphism with a bounded inverse. Therefore, the first-order matrix operator corresponding to \eqref{eq:MGT} is topologically equivalent to D on $\mathcal{Z}$, and thus it generates a strongly continuous group on $\mathcal{H}$.\\
By Duhamel's principle, we can prove that equation \eqref{MGT2} admits a unique solution $\tilde{V}(t)=(\tilde{v}(t),\tilde{v}_t(t),\tilde{v}_{tt}(t))^T \in C([0,T];\mathcal{H})$. Thus, \eqref{MGT} admits a unique solution
\[
v \in C([0,T];H^2(M))\cap C^1([0,T];H^1(M)) \cap C^2([0,T];L^2(M)).
\]

Respectively multiplying both sides the first equation of \eqref{MGT} by $v_{tt}$ and $v_t$, integrating over $(0,t) \times M$, applying the Green formula, we have
\begin{align*}
&\frac12 \int_M \bigl(v_{tt}^2 + b|\nabla_g v_t|_g^2\bigr)\ \mathrm{d}V_g
+ \int_0^t \int_M \alpha v_{tt}^2 \mathrm{d}V_g\mathrm{d}l
+ \int_M c^2\langle \nabla_g v, \nabla_g v_t \rangle_g \mathrm{d}V_g \\
& = \frac12 \int_M \bigl(v_2^2 + b|\nabla_g v_1|_g^2\bigr)\mathrm{d}V_g
+ \int_M c^2\langle \nabla_g v_1, \nabla_g v_0 \rangle_g \mathrm{d}V_g
+ \int_0^t \int_M c^2|\nabla_g v_t|_g^2 \mathrm{d}V_g\mathrm{d}l \\
&+ \int_0^t \int_M F v_{tt} \mathrm{d}V_g\mathrm{d}l
+ \int_0^t \int_{\partial M} h_{tt} \bigl(c^2 \partial_\nu v + b \partial_\nu v_t\bigr)\mathrm{d}S_g \mathrm{d}l \\
&- \int_0^t \int_M v_{tt}\left(\langle \nabla_g v_t, \nabla_g b \rangle_g + \langle \nabla_g v, \nabla_g c^2 \rangle_g \right) \mathrm{d}V_g\mathrm{d}l,
\end{align*}
and
\begin{align*}
&\frac12 \int_M \bigl(\alpha v_t^2 + c^2|\nabla_g v|_g^2\bigr)\mathrm{d}V_g +\int_M v_tv_{tt}\mathrm{d}V_g
+ \int_0^t \int_M b|\nabla_g v_t|_g^2 \mathrm{d}V_g\mathrm{d}l \\
& = \int_0^t \int_M  v_{tt}^2 \ \mathrm{d}V_g\mathrm{d}l + \frac12 \int_M \bigl(\alpha v_1^2+c^2|\nabla_g v_0|_g^2 + 2v_1v_2\bigr)\mathrm{d}V_g + \int_0^t \int_M F v_t \mathrm{d}V_g\mathrm{d}l \\
&+ \int_0^t \int_{\partial M} h_t \bigl(c^2 \partial_\nu v + b \partial_\nu v_t\bigr)\mathrm{d}S_g \mathrm{d}l - \int_0^t \int_M v_t\left(\langle \nabla_g v_t, \nabla_g b \rangle_g + \langle \nabla_g v, \nabla_g c^2 \rangle_g \right) \mathrm{d}V_g\mathrm{d}l,
\end{align*}
where lower-order integral terms generated by variable coefficients appear on the right-hand side. These terms can be absorbed by the Gronwall inequality with the help of the Cauchy-Schwartz inequality. \\
Then multiplying equation \eqref{MGT} by $L_gv=v_{tt}-b\Delta_gv$ and integrating over $(0,t) \times M$, we have
\begin{align*}
&\frac{1}{2} \int_M \left|L_g v(t)\right|^2 \mathrm{d}V_g + \frac{1}{2} \int_0^t \int_M \omega\left|L_g v\right|^2 \mathrm{d}V_g \mathrm{d}l\\
&\leq \frac{1}{2} \int_M \left|L_g v(0)\right|^2 \mathrm{d}V_g + C \int_0^t \int_M \left(|F|^2 + v_{tt}^2\right) \mathrm{d}V_g \mathrm{d}l.
\end{align*}
Together with above three equalities, we arrive at the following estimate:
\begin{align}
    &\frac12 \int_M \bigl(v_{tt}^2 + |\nabla_g v_t|_g^2 + v_t^2 + |\nabla_g v|_g^2 + |\Delta_g v|^2\bigr)\mathrm{d}V_g \nonumber \\
& \leq C\Bigl( \|v_0\|_{H^2(M)}^2 + \|v_1\|_{H^1(M)}^2 + \|v_2\|_{L^2(M)}^2 \Bigr)
+ C \int_0^t \int_M \bigl(|\nabla_g v_t|_g^2 + v_{tt}^2 + v_t^2 + |\nabla_g v|_g^2\bigr)\mathrm{d}V_g\mathrm{d}l \nonumber \\
& + C\Bigl( \|F\|_{L^2(\mathcal{M})}^2 + \|h\|_{H^2([0,T];L^2(\partial M))}^2 + \|\partial_\nu v\|_{H^1([0,T];L^2(\partial M))}^2 \Bigr).\label{esm1}
\end{align}
We next estimate the term $\|\partial_\nu v\|_{H^1([0,T];L^2(\partial M))}^2$ appearing in the last term of \eqref{esm1}. Let $H$ be a smooth vector field on $\overline{M}$ such that $H|_{\partial M} = \nu$. Here, we denote by $DH$ the covariant derivative tensor of $H$ with respect to $g$, defined by $DH(X,Y)=\langle\nabla_XH,Y\rangle_g$ for any vector fields $X,Y$, and by $div_gH = tr_g(DH)$ the standard divergence on $(M,g)$.\\
We calculate $2(Hv)L_gv$ and integrate it over $(0,t) \times M$, via the divergence theorem, we obtain
\begin{align}
    \int_0^t \int_{\partial M} b|\partial_{\nu}v|_g^2\mathrm{d}S_g\mathrm{d}l &= -2 \int_0^t \int_M(Hv)L_gv \mathrm{d}V_g\mathrm{d}l + 2\int_M v_t Hv \mathrm{d}V_g - 2\int_M v_1Hv_0 \mathrm{d}V_g \nonumber \\
    &+\int_0^t\int_M \left(2b DH(\nabla_g v,\nabla_g v) -(v_t^2 - b|\nabla_g v|_g^2) \text{div}_gH  \right)\mathrm{d}V_g\mathrm{d}l \nonumber \\
    &- \int_0^t\int_M \langle \nabla_g b, 2(Hv)\nabla_g v -  |\nabla_g v|_g^2 H \rangle_g\mathrm{d}V_g\mathrm{d}l  \nonumber \\
    &+ \int_0^t\int_{\partial M}(b|\nabla_{\mu} h|_g^2 - h_t^2) \mathrm{d}S_g\mathrm{d}l.\label{partialv1}
\end{align}
where $\nabla_{\mu}v$ denotes the tangential gradient of $v$ along $\partial M$ with respect to the metric $g$, and we apply the orthogonal decomposition of the gradient on the boundary: $|\nabla_g v|_g^2 = |\nabla_{\mu} v|_g^2 + |\partial_{\nu} v|_g^2$.\\
Multiplying $L_g v_t$ by $Hv_t$, by the same argument, we have
\begin{align}
    \int_0^t \int_{\partial M} b|\partial_{\nu}v_t|_g^2\mathrm{d}S_g\mathrm{d}l &= -2 \int_0^t \int_M(Hv_t)L_gv_t \mathrm{d}V_g\mathrm{d}l + 2\int_M v_{tt} Hv_t \mathrm{d}V_g - 2\int_M v_2Hv_1 \mathrm{d}V_g \nonumber \\
    &+\int_0^t\int_M \left(2b DH(\nabla_g v_t,\nabla_g v_t) -(v_{tt}^2 - b|\nabla_g v_t|_g^2) \text{div}_gH  \right)\mathrm{d}V_g\mathrm{d}l \nonumber \\
    &- \int_0^t\int_M \langle \nabla_g b, 2(Hv_t)\nabla_g v_t -  |\nabla_g v_t|_g^2 H \rangle_g\mathrm{d}V_g\mathrm{d}l \nonumber \\
    &+ \int_0^t\int_{\partial M}(b|\nabla_{\mu} h_t|_g^2 - h_{tt}^2) \mathrm{d}S_g\mathrm{d}l.\label{partialv2}
\end{align}
Notice that the second-last terms on the right-hand sides of \eqref{partialv1}, \eqref{partialv2} are lower-order integral terms arising from the variable coefficient $b$, which can be bounded by the integrals of $|\nabla_g v|_g^2, |\nabla_g v_t|_g^2$ over $(0,t) \times M$. In particular, this implies that $\partial_{\nu} v \in N_1$.\\
It follows from \eqref{esm1}, \eqref{partialv1} and \eqref{partialv2} that
\begin{align}
        &\int_M \bigl(v_{tt}^2 + |\nabla_g v_t|_g^2 + v_t^2 + |\nabla_g v|_g^2 + |\Delta_g v|^2\bigr)\mathrm{d}V_g \nonumber \\
& \leq C\Bigl( \|v_0\|_{H^2(M)}^2 + \|v_1\|_{H^1(M)}^2 + \|v_2\|_{L^2(M)}^2 \Bigr)  \nonumber \\
&+ C \int_0^t \int_M \bigl(|\nabla_g v_t|_g^2 + v_{tt}^2 + v_t^2 + |\nabla_g v|_g^2 + |\Delta_g v|^2\bigr)\mathrm{d}V_g\mathrm{d}l \nonumber \\
& + C\Bigl( \|F\|_{L^2(\mathcal{M})}^2 + \|h\|_{H^2([0,T];L^2(\partial M))}^2 + \|h\|_{H^1([0,T];H^1(\partial M))}^2 \Bigr).\label{esm2}
\end{align}
Applying the Gronwall inequality and combining \eqref{partialv2} and \eqref{esm2}, yields
\begin{equation*}
\begin{aligned}
\|v\|_{E^2} + \|\partial_{\nu}v\|_{H^1([0,T];L^2(\partial M))}
&\le C\Bigl(
\|v_0\|_{H^2(M)} + \|v_1\|_{H^1(M)} + \|v_2\|_{L^2(M)} \\
&\qquad\quad
+ \|F\|_{L^2(\mathcal{M})} + \|h\|_{H^2(\Gamma)}
\Bigr).
\end{aligned}
\end{equation*}
Then, the proof of the case $m=0$ is completed.
\end{proof}
In light of the above, we can now proceed to the proof of Proposition \ref{propo1}.
\begin{proof}
Since $(v_0,v_1,v_2,f) \in H^{m+2}(M)\times H^{m+1}(M)\times H^m(M) \times A_{m}$ satisfies the compatibility conditions \eqref{compa},
by  Lemma \ref{2}, let $u_0$ be the solution to 
\begin{equation}\label{JMGTlinear}
\left\{ 
\begin{aligned}
&\partial_t^3u_0 + \alpha \partial_{t}^2 u_0  - b\Delta_g \partial_t u_0-c^2\Delta_g u_0 = 0
    && \text{in } \mathcal{M}, \\
&u_0= f(t,x)
    && \text{on } \Gamma, \\
&u_0(0,x)=v_0(x), \partial_t u_0(0,x) = v_1(x), \partial_t^2 u_0(0,x)=v_2(x) 
    && \text{in }  M,
\end{aligned}
\right.
\end{equation}
then $u_0$ satisfies the estimate
\begin{align}
\|u_0\|_{E^{m+2}}+\|\partial_{\nu}u_0\|_{N_{m+1}}
&\le C\Big(
\|v_0\|_{H^{m+2}(M)}+\|v_1\|_{H^{m+1}(M)}
+\|v_2\|_{H^{m}(M)}+\|f\|_{H^{m+2}(\Gamma)}
\Big) \notag \\
&\le C\delta . \label{estimate1}
\end{align}
Given $w \in B_{\delta}:=\left\{u \in E^{m+2}: \|u\|_{E^{m+2}} \leq \delta
\right\}$. Let $\tilde u$ be the solution of 
\begin{equation}\label{fixpoint}
\left\{
\begin{aligned}
&\partial_t^3 \tilde u + \alpha \partial_{t}^2 \tilde u  - b\Delta_g \partial_t \tilde u-c^2\Delta_g \tilde u = \beta \partial_{t}(\partial_tw+\partial_t u_0)^2 + \kappa\partial_t|\nabla_g w+\nabla_g u_0|_g^2 
    && \text{in } \mathcal{M}, \\
&\tilde u= 0
    && \text{on } \Gamma, \\
&\tilde u(0,x)=0, \partial_t \tilde u(0,x) = 0, \partial_t^2 \tilde u(0,x)=0 
    && \text{in }  M.
\end{aligned}
\right.
\end{equation}
Since $E^m$ is an algebra, $(\beta,\kappa) \in C^m( \overline M ) \times C^m ( \overline M )$ and $\beta=0, \ \kappa=0$ on $\partial M$, one can show that 
\begin{align*}
&L(t,x):=\beta \partial_{t}(\partial_tw+\partial_t u_0)^2 + \kappa\partial_t|\nabla_g w+\nabla_g u_0|_g^2  \in E^m, \\
&\partial_t^kL(t,x)=0 \text{ on } \left\{t=0 \right\} \times \partial M \text{ for } k=0,1,2,\cdots,m-2 ,
\end{align*}
then $(0,0,0,0)$ satisfies the compatibility conditions \eqref{compa}.
Moreover, from Lemma \ref{2}, we have 
\begin{align}
&\| \tilde u \|_{E^{m+2}}+ \| \partial_{\nu}\tilde u \|_{N_{m+1}} \leq C  \| \beta \partial_{t}(\partial_tw+\partial_t u_0)^2 + \kappa\partial_t|\nabla_g w+\nabla_g u_0|_g^2 \|_{E^m} \nonumber  \\
&\leq C (\| \left(\partial_t w+\partial_t u_0\right)^2  \|_{E^{m+1}} + \| \left|\nabla_g w+\nabla_g u_0\right|_g^2  \|_{E^{m+1}}) \nonumber \\
& \leq C (\| w\|_{E^{m+2}}^2+\| u_0  \|_{E^{m+2}}^2) \leq C \delta^2. \label{estimate2}
\end{align} 
For $\delta$ small enough, we have $\tilde u \in B_{\delta}$. Then we define a map
\begin{equation}
\mathcal{G}: B_{\delta}\to B_{\delta}, \quad  \mathcal {G}(w)=\tilde u.
\end{equation}
Next let $w_j \in B_{\delta}$ for $j=1,2$, and $\tilde u_j$ be the solution to \eqref{fixpoint} with $w$ replaced by $w_j$. Then we have their difference $V:=\tilde u_1 -\tilde u_2$ satisfying the equation
\begin{equation*}
\left\{
\begin{aligned}
&\partial_t^3 V + \alpha \partial_{t}^2 V  - b\Delta_g \partial_t V-c^2\Delta_g V  \\
&= \beta\partial_t( \partial_t(w_1-w_2)\partial_t(w_1+w_2+2u_0)) +\kappa\partial_t(\langle \nabla_g(w_1-w_2) ,\nabla_g(w_1+w_2+2u_0)\rangle_g) 
    && \text{in } \mathcal{M}, \\
&V= 0
    && \text{on } \Gamma, \\
&V(0,x)=0, \partial_t V(0,x) = 0, \partial_t^2 V(0,x)=0 
    && \text{in }  M.
\end{aligned}
\right.
\end{equation*}
Therefore, by using Lemma \ref{2} we obtain 
\begin{align*}
&\| V \|_{E^{m+2}}+ \| \partial_{\nu}V \|_{N_{m+1}}  \\ 
&\leq C\| \beta\partial_t(\partial_t(w_1-w_2)\partial_t(w_1+w_2+2u_0)) +\kappa\partial_t(\langle \nabla_g(w_1-w_2) ,\nabla_g(w_1+w_2+2u_0)\rangle_g)  \|_{E^m} \\
&\leq C \left(\| \partial_t (w_1-w_2)\partial_t(w_1+w_2+2u_0)  \|_{E^{m+1}} + \| \langle \nabla_g(w_1-w_2) ,\nabla_g(w_1+w_2+2u_0)\rangle_g \|_{E^{m+1}}\right) \\
& \leq C \| w_1+w_2+2u_0\|_{E^{m+2}}\| w_1-w_2  \|_{E^{m+2}} \leq C \delta \|w_1-w_2\|_{E^{m+2}}.
\end{align*}
For $\delta$ small enough, it implies that $\mathcal{G}$ is a contraction on $B_{\delta}$. By the Banach fixed-point theorem, there exists a unique fixed point $\bar u$ which is the solution to \eqref{fixpoint} with $\tilde u$ and $w$ replaced by $\bar u$. Therefore $u:=\bar u+u_0$ is the solution to \eqref{JMGT} with Kuznetsov-type nonlinearity, and by combining \eqref{estimate1} and \eqref{estimate2}, we obtain that the estimate \eqref{estimate3} holds.
\end{proof}
The proof of Proposition \ref{propo2} is very similar to that of Proposition \ref{propo1}, so we omit the details. 

\begin{remark}
It is not hard to see that the solution map $(v_0,v_1,v_2,f) \to u$ is $C^{\infty}$ Fr\'{e}chet differentiable in two different nonlinearity cases.
\end{remark}

\section{The Euclidean Case} \label{Euclidean}
In this section, we present second order linearization method, the construction of geometric optics solutions of
the linear MGT equation and the proof of Theorem \ref{thm1}. In this section, we set
\begin{equation*}
Q:=(0,T) \times \Omega.
\end{equation*}
\subsection{Second Order Linearization}

We will use a second order linearization of the DtN map to study our inverse problem. This higher order linearization technique has been extensively used in the literature, see \cite{acosta_nonlinear_2022,hintz_inverse_2022,uhlmann_inverse_2021}. In the zero initial data case, we choose boundary data $f_0,f_1,f_2 \in A_m$ such that $f_1$ and $f_2$ vanish near $\{ t=0 \}$ and $f_0$ vanishes near $\{ t=T \}$.  
We choose $\epsilon_1$ and $\epsilon_2$ sufficiently small so that $u$ is the unique solution to \eqref{JMGT} with  boundary data $f = \epsilon_1 f_1 + \epsilon_2 f_2$ on $\Gamma$,  and define
\begin{equation}
 u_i= \frac{\partial u} {\partial \epsilon_i}\Bigg|_{\epsilon_1=\epsilon_2=0}, \ w = \frac{\partial^2 u}{\partial \epsilon_1 \partial \epsilon_2} \Bigg|_{\epsilon_1=\epsilon_2=0}.
\end{equation}
Then $w$ satisfies
\begin{equation}\label{eq:second-order}
\left\{
\begin{aligned}
&\partial_{t}^3 w +\alpha \partial_t^2 w - b \Delta \partial_t w -c^2 \Delta w= 2 \beta\partial_{t} (\partial_t u_{1} \partial_t u_{2}) +2\kappa \partial_t(\nabla u_1\cdot\nabla u_2) 
 && \text{in }Q,\\ 
&w = 0 
    && \text{on } \Gamma, \\
&w = \partial_t w=\partial_t^2 w = 0 
    && \text{on } \{t=0\} \times \Omega,
\end{aligned}
\right.
\end{equation}
where, for $i=1,2$, $u_i$ is the solution to the linear MGT equation
\begin{equation*}\label{forward}
\left\{
\begin{aligned}
&\partial_{t}^3 u_i +\alpha \partial_t^2 u_i - b \Delta \partial_t u_i -c^2 \Delta u_i= 0
 && \text{in }Q,\\ 
&u_i = f_i 
    && \text{on } \Gamma, \\
&u_i = \partial_t u_i=\partial_t^2 u_i = 0 
    && \text{on } \{t=0\} \times \Omega.
\end{aligned}
\right.
\end{equation*}
Additionally, let $u_{0}$ solve the backward problem:
\begin{equation}\label{eq:backward}
\left\{
\begin{aligned}
&-\partial_{t}^3 u_0 +\alpha \partial_t^2 u_0 + b \Delta \partial_t u_0 -c^2 \Delta u_0= 0
 && \text{in }Q,\\ 
&u_0 = f_0 
    && \text{on } \Gamma, \\
&u_0 = \partial_t u_0=\partial_t^2 u_0 = 0 
    && \text{on } \{t=T\} \times \Omega.
\end{aligned}
\right.
\end{equation}
Observe that $\frac{\partial^2}{\partial \epsilon_1 \partial \epsilon_2} \Lambda^\textrm{K}_{\beta,\kappa}(\epsilon_1 f_1 +\epsilon_2 f_2)|_{\epsilon_1=\epsilon_2=0} =\partial_{\nu} (c^2w+b\partial_t w)|_{\Gamma}$.
Integration by parts yields
\begin{align}\label{eq:ibp}
&\int_{\Gamma} \frac{\partial^2}{\partial \epsilon_1 \partial \epsilon_2} \Lambda^\textrm{K}_{\beta,\kappa}(\epsilon_1 f_1 +\epsilon_2 f_2)\Big|_{\epsilon_1=\epsilon_2=0}  f_0  \mathrm{d}s  \mathrm{d}t \nonumber \\
&=\int_{\Gamma} \partial_{\nu} (c^2w+b\partial_t w) f_0  \mathrm{d}s  \mathrm{d}t \nonumber \\
&=\int_{Q} \nabla\cdot (c^2\nabla w+b\nabla \partial_t w) u_0  \mathrm{d}x  \mathrm{d}t+ \int_{Q} (c^2\nabla w+b\nabla \partial_t w)\cdot\nabla u_0     \mathrm{d}x \mathrm{d}t \nonumber \\
&= \int_Q (\partial_t^3 w+\alpha \partial_t^2w-2 \beta\partial_{t} (\partial_t u_{1} \partial_t u_{2}) -2\kappa \partial_t(\nabla u_1\cdot\nabla u_2) )u_0-c^2 w\Delta u_0 - b\partial_t w \Delta u_0 \mathrm{d}x \mathrm{d}t   \nonumber \\
&= \int_Q w(-\partial_t^3u_0+\alpha \partial_t^2 u_0+b\Delta \partial_tu_0-c^2\Delta u_0)+ 2\beta \partial_tu_1 \partial_tu_2  \partial_tu_{0} +2\kappa\nabla u_1\cdot \nabla u_2\partial_t u_0  \mathrm{d}x  \mathrm{d}t  \nonumber \\
&= \int_Q 2\beta \partial_tu_1 \partial_tu_2  \partial_tu_{0} +2\kappa\nabla u_1\cdot \nabla u_2 \partial_t u_0  \mathrm{d}x  \mathrm{d}t .
\end{align}
It remains to recover the parameters $\beta(x)$ and $\kappa(x)$ from the integral identity \eqref{eq:ibp}. Applying \eqref{eq:ibp} with $(\beta,\kappa)=(\beta_j,\kappa_j)$ for $j=1,2$ and subtracting the resulting two identities, we obtain
\begin{equation}\label{identity}
 \int_Q\beta \partial_tu_1 \partial_tu_2  \partial_tu_{0} +\kappa\nabla u_1\cdot \nabla u_2 \partial_t u_0  \mathrm{d}x  \mathrm{d}t=0, 
\end{equation}
where $\beta$ denotes $\beta_1-\beta_2$ and $\kappa$ denotes $\kappa_1-\kappa_2$.

\subsection{Construction of GO solutions}

To prove the theorem, we construct \textit{geometric optics} (GO) solutions  to the linear problem
\begin{equation}\label{linearPu}
\begin{aligned}
Pu :=\,& \partial_t^3 u+ \alpha\partial_t^2 u - b\Delta\partial_t u-c^2\Delta u=0 \quad \text{in } Q,\\
& u=\partial_t u=\partial_t^2u=0     \quad \text{on} \ \{ t=0\} \times\Omega,
\end{aligned}
\end{equation}
of the form
\begin{equation}\label{GOsolution}
        u(t,x)= \mathrm{e}^{\mathrm{i}\lambda \varphi(t,x)} a_{\lambda}(t,x)+R_\lambda(t,x),
\end{equation}
where $\lambda$ is a large parameter and $R_\lambda$ denotes the remainder term that tends to zero as $\lambda \to \infty$ in a suitable norm. 

We first construct an asymptotic solution of the form
\begin{equation*}
    u_{\lambda}= \mathrm{e}^{\mathrm{i}\lambda(\psi(x)+t)} a_{\lambda}(t,x)=\mathrm{e}^{\mathrm{i} \lambda( \frac{1}{\sqrt{b}}w\cdot x+t)}(a_0(t,x)+\lambda^{-1}a_1(t,x)) \text { in } Q,
\end{equation*}
where the amplitude has the form $a_{\lambda}(t,x) = a_0(t,x) + \lambda^{-1} a_1(t,x)$ and $\psi \in C^{\infty}\left(\overline{\Omega} \right)$ is a real-valued phase function.

Substituting this ansatz into the linear equation, we have 
\begin{equation}\label{Pu_lam}
    Pu_{\lambda} = \mathrm{e}^{\mathrm{i}\lambda(\psi(x)+t)}(\mathrm{i}\lambda^3 T_3a_{\lambda} +\lambda^2 T_2a_{\lambda} +\mathrm{i}\lambda T_1 a_{\lambda} +Pa_{\lambda}),
\end{equation}
where the operators $T_3$, $T_2$, and $T_1$ are defined by
\begin{align*}
    & T_3 a = (b|\nabla \psi|^2-1) a,\\
    & T_2 a = (b|\nabla \psi|^2-3)\partial_t a +2b \nabla\psi \cdot \nabla a + (b\Delta \psi+ c^2|\nabla \psi|^2 -\alpha)a, \\
    & T_1 a = 3\partial_t^2 a + (2 \alpha-b\Delta \psi) \partial_t a - 2\nabla \psi \cdot(b \nabla \partial_t a+ c^2 \nabla a) - b \Delta a - c^2\Delta \psi a.
\end{align*}
To make the leading-order term in \eqref{Pu_lam} vanish, we impose the eikonal equation
\begin{equation*}
    b \nabla\psi \cdot \nabla\psi=1,
\end{equation*}
which is solved by the linear phase function $\psi (x) = \frac{1}{\sqrt{b}} w \cdot x$, where $w \in \mathbb{S}^{n-1}$.\\
It follows that the amplitude satisfies
\begin{align}
    &T_2 a_0 =0 \label{eq:a0},\\
    &T_2 a_1 + i T_1 a_0 =0 \label{eq:a1}.
\end{align}
Recalling that $\gamma=\alpha-\frac{c^2}{b}$, we rewrite \eqref{eq:a0} and \eqref{eq:a1} as
\begin{align}
    &\partial_t a_0 - \sqrt{b}w \cdot \nabla a_0 + \frac{\gamma}{2} a_0 =0,\label{trans:a0}\\
    &\partial_t a_1 - \sqrt{b}w \cdot \nabla a_1 + \frac{\gamma}{2} a_1 = \tilde{\xi},\label{trans:a1}
\end{align}
where $2\tilde{\xi} = \mathrm{i}[3 \partial_t^2 a_0 + 2\alpha \partial_t a_0 -2\sqrt{b}w \cdot (\frac{c^2}{b} \nabla a_0 + \nabla \partial_t a_0) - b\Delta a_0]$.\\
We choose the principal and subprincipal amplitudes as
\begin{align}
    a_0(t,x) &= e^{-\frac{\gamma}{2}t}\,\phi_{\epsilon}(x+\sqrt{b}\,t w-p), \notag\\
    a_1(t,x) &= e^{-\frac{\gamma}{2}t}\,\phi_{\epsilon}(x+\sqrt{b}\,t w-p)
    + \int_0^t \tilde{\xi}\!\left(\tau, x+\sqrt{b}\,w(t-\tau)\right)
    e^{-\frac{\gamma}{2}(t-\tau)}\, d\tau, \label{eqa1}
\end{align}
where $p \notin \overline \Omega$, $\phi \in C_0^{\infty}(B_1 (0))$, $\int_{\mathbb{R}^n} \phi^3 \mathrm dx=1$ and $\phi_{\epsilon}(y)=\epsilon^{-\frac{n}{3}} \phi(\frac{y}{\epsilon})$. We choose $\epsilon>0$ sufficiently small such that, at $t=0$, the support of $\phi_\epsilon(\,\cdot-p)$ is contained in a sufficiently small neighborhood of $p$ and lies outside $\overline{\Omega}$. Then, by construction, $\partial_t^k a_0(0,x)=0$ in $\Omega$ for $k=0,1,2$. Then, by \eqref{eqa1}, we also have $\partial_t^k a_1(0,x)=0$ for $k=0,1,2$ in $\Omega$ and $a_{\lambda}$ satisfies the zero initial conditions.     

Then we define $F_{\lambda} = e^{i \lambda \psi} (i T_1 a_1 + Pa_0+\lambda^{-1} Pa_{1})$ and the remainder term satisfies the following linear system
\begin{equation} \label{eq:remainder}
\left\{
\begin{aligned}
&\partial_{t}^3 R_{\lambda} +\alpha \partial_t^2 R_{\lambda} - b \Delta \partial_t R_{\lambda} -c^2 \Delta R_{\lambda}= - \mathrm{e}^{\mathrm{i} \lambda t} F_{\lambda}
 && \text{in }Q,\\ 
&R_{\lambda} = 0 
    && \text{on } \Gamma, \\
&R_{\lambda} = \partial_t R_{\lambda}=\partial_t^2 R_{\lambda} = 0 
    && \text{on } \{t=0\} \times \Omega.
\end{aligned}
\right.
\end{equation} 
Similar to \cite[Theorem 4.1]{fu_inverse_2024}, we obtain the following result.
\begin{proposition}\label{propo3.1}
The equation \eqref{linearPu} admits a GO solution of the form \eqref{GOsolution}, where the remainder term $R_{\lambda}$ satisfies \eqref{eq:remainder}. Moreover, there exists a positive constant $C$ such that
\begin{align}\label{estimateR}
   & \lambda \left( \left\|R_{\lambda} \right\|_{L^2(Q)} + \left\|\partial_t R_{\lambda} \right\|_{L^2(Q)} + \left\| \nabla R_{\lambda} \right\|_{L^2(Q)} \right) + \left\|\partial_t^2 R_{\lambda} \right\|_{L^2(Q)} + \left\| \nabla \partial_t R_{\lambda} \right\|_{L^2(Q)} \nonumber \\ 
   & \leq C \left\| F_{\lambda} \right\|_{H^1(0,T;L^2(\Omega))},
\end{align}
which implies that $\|R_\lambda\|_{L^2(Q)}$, $\left\| \nabla R_{\lambda} \right\|_{L^2(Q)}$ and $\|\partial_t R_\lambda\|_{L^2(Q)}$ tend to zero as $\lambda\to\infty$.
\end{proposition}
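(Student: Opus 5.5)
Proposition \ref{propo3.1} has two parts: existence of $R_\lambda$, and the weighted energy estimate \eqref{estimateR}. Existence is direct. By construction $a_\lambda$ has vanishing Cauchy data at $t=0$, so $R_\lambda$ solves the linear MGT problem \eqref{eq:remainder} with zero initial data and zero Dirichlet data. Its source $-\mathrm{e}^{\mathrm{i}\lambda t}F_\lambda$ is smooth. Since $\phi_\epsilon(\cdot-p)$ is supported outside $\overline\Omega$ near $t=0$, the source vanishes to infinite order at $t=0$, so the compatibility conditions \eqref{compa} hold trivially. Lemma \ref{2} then gives a unique $R_\lambda\in E^{2}$, and $u=u_\lambda+R_\lambda$ solves \eqref{linearPu}. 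Here $\|F_\lambda\|_{H^1(0,T;L^2)}$ is bounded uniformly in $\lambda$ (with $\epsilon$ fixed), because $F_\lambda=\mathrm{e}^{\mathrm{i}\lambda\psi}(\mathrm{i}T_1a_1+Pa_0+\lambda^{-1}Pa_1)$. The factor $\mathrm{e}^{\mathrm{i}\lambda\psi}$ is independent of $t$, so $\partial_t$ does not produce powers of $\lambda$ except through the oscillating factor $\mathrm{e}^{\mathrm{i}\lambda t}$, which sits outside $F_\lambda$ in \eqref{eq:remainder}.

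The core is the gain of $\lambda$ in the lower-order norms. The plan is to exploit the oscillation $\mathrm{e}^{\mathrm{i}\lambda t}$ by integrating in time. Write $G(t)=-\mathrm{e}^{\mathrm{i}\lambda t}F_\lambda(t)$ and $H(t)=\int_0^t G(s)\,ds$. Integrating by parts in $s$,
\[
H(t)=\frac{1}{\mathrm{i}\lambda}\Bigl(-\mathrm{e}^{\mathrm{i}\lambda t}F_\lambda(t)+F_\lambda(0)+\int_0^t \mathrm{e}^{\mathrm{i}\lambda s}\partial_sF_\lambda(s)\,ds\Bigr),
\]
and hence $\|H\|_{L^2(Q)}+\|H\|_{L^\infty(0,T;L^2)}\le C\lambda^{-1}\|F_\lambda\|_{H^1(0,T;L^2)}$. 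Since the coefficients are time independent, $S(t)=\int_0^tR_\lambda(s)\,ds$ solves the same MGT problem with source $H$, zero initial data and zero boundary data. Here we use $\partial_t^kR_\lambda(0)=0$ and $\partial_t^kS(0)=0$ for $k\le2$, and $\partial_t^3S(0)=0$ because $G(0)=0$. Applying the basic energy estimate from the proof of Lemma \ref{2} (the $m=0$ identities obtained with the multipliers $v_{tt}$, $v_t$ and $L_gv$, followed by Gronwall) to $S$ gives
\[
\|\partial_t^2S\|_{L^\infty L^2}+\|\nabla\partial_tS\|_{L^\infty L^2}+\|\partial_tS\|_{L^\infty L^2}+\|\nabla S\|_{L^\infty L^2}\le C\|H\|_{L^2(Q)}\le C\lambda^{-1}\|F_\lambda\|_{H^1L^2}.
\]
Because $\partial_tS=R_\lambda$, this reads $\lambda(\|\partial_tR_\lambda\|+\|\nabla R_\lambda\|+\|R_\lambda\|)\le C\|F_\lambda\|_{H^1L^2}$ in $L^2(Q)$, which are exactly the $\lambda$-weighted terms in \eqref{estimateR}.

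For the unweighted terms $\|\partial_t^2R_\lambda\|_{L^2(Q)}+\|\nabla\partial_tR_\lambda\|_{L^2(Q)}$, apply the same energy estimate directly to $R_\lambda$; its right-hand side is $\|G\|_{L^2(Q)}\le\|F_\lambda\|_{L^2(Q)}$. Combining the two bounds yields \eqref{estimateR}. The decay claims then follow from the uniform bound on $\|F_\lambda\|_{H^1(0,T;L^2)}$, which gives $O(\lambda^{-1})$ for the weighted norms.

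The main obstacle is the energy estimate with only an $L^2$ source and homogeneous boundary data. We must check that the Gronwall argument of Lemma \ref{2} controls $\partial_t^2v$, $\nabla\partial_tv$ and $\nabla v$ by $\|F\|_{L^2(Q)}$ alone. The boundary terms vanish since $h=0$, and with constant $\alpha,b,c$ the lower-order variable-coefficient terms are absent. The estimate relies on the positivity $\gamma>0$, or at least on the fact that the finite-time Gronwall constant depends only on $T$. I would first verify the combination of the $v_{tt}$ and $v_t$ multiplier identities, in which the indefinite term $\int_Mv_tv_{tt}$ and the term $\int c^2\nabla v\cdot\nabla v_t$ are absorbed via Cauchy--Schwarz into $\|v_{tt}\|^2+\|\nabla v_t\|^2$ and the Gronwall integral.
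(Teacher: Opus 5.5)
Your argument is correct and complete. The paper does not actually prove this proposition; it only says the result holds ``similar to \cite[Theorem 4.1]{fu_inverse_2024}''. Your proof therefore supplies what is left to that reference. The mechanism you use is the standard way such $\lambda$-gain remainder estimates are obtained: pass to the time antiderivative $S=\int_0^t R_\lambda\,ds$, which solves the same MGT problem with source $H=\int_0^t G\,ds$, and $H$ is $O(\lambda^{-1})$ after one integration by parts against $\mathrm{e}^{\mathrm{i}\lambda s}$.

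One simplification: the step you single out as the main obstacle is unnecessary. You already have $\sup_t\|H(t)\|_{L^2(\Omega)}\le C\lambda^{-1}\|F_\lambda\|_{H^1(0,T;L^2(\Omega))}$ and $\sup_t\|F_\lambda(t)\|_{L^2(\Omega)}\le C\|F_\lambda\|_{H^1(0,T;L^2(\Omega))}$. So Lemma~\ref{2} with $m=0$, exactly as stated (right-hand side $\|F\|_{E^0}=\sup_t\|F(t)\|_{L^2}$, zero initial and boundary data), suffices. Applied to $S$, it controls $\sup_t(\|R_\lambda\|_{H^1}+\|\partial_tR_\lambda\|_{L^2})$ and hence the $\lambda$-weighted terms. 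Applied to $R_\lambda$, it controls $\|\partial_t^2R_\lambda\|_{L^2(Q)}+\|\nabla\partial_tR_\lambda\|_{L^2(Q)}$. No $L^2(Q)$-source version of the energy estimate is needed, and no use of $\gamma>0$.

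A minor point: $\partial_t^3S(0)=\partial_t^2R_\lambda(0)=0$ already follows from the initial data of $R_\lambda$, not from $G(0)=0$.
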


\subsection{Proof of Theorem \ref{thm1}}
\begin{proof}
In the integral identity \eqref{identity}, we choose
\begin{align*}
    u_j = \mathrm{e}^{\mathrm{i} \lambda( \frac{1}{\sqrt{b}} w \cdot x +t)} (a_{j,0} + \lambda^{-1} a_{j,1}) +R_{j,\lambda},
\end{align*}
where $a_{j,0}$ solves \eqref{trans:a0} for $j=1,2$ and $w \in \mathbb S^{n-1}$. By a similar GO construction for the backward problem, we choose $u_0$ in the form
\begin{equation*}
    u_0 = \mathrm{e}^{-2\mathrm{i} \lambda( \frac{1}{\sqrt{b}} w \cdot x +t)} (a_{0,0} + \lambda^{-1} a_{0,1}) +R_{0,\lambda},
\end{equation*}
where the principal amplitude coefficient $a_{0,0}$ is given by
\begin{equation*}
    \partial_t a_{0,0} - \sqrt{b}w \cdot \nabla a_{0,0} - \frac{\gamma}{2} a_{0,0} =0,
\end{equation*}
 $\|R_{0,\lambda}\|_{L^2(Q)}$, $\|\partial_t R_{0,\lambda}\|_{L^2(Q)}$ and $\left\| \nabla R_{0,\lambda} \right\|_{L^2(Q)}$ tend to zero as $\lambda\to\infty$.

Substituting the above ansatz for $u_i, \ i=0,1,2$ into \eqref{identity} and dividing by $\lambda^3$. Letting $\lambda \to \infty$,  we obtain
\begin{equation}\label{lam-3}
    \lim\limits_{\lambda \to \infty} \lambda^{-3} \int_Q \beta \partial_tu_1 \partial_tu_2  \partial_tu_{0} + \kappa\nabla u_1 \cdot \nabla u_2 \partial_t u_0  \mathrm{d}x  \mathrm{d}t = 2\mathrm{i} \int_Q (\beta + \frac{1}{b} \kappa) a_{1,0} a_{2,0} a_{0,0} \mathrm{d}x \mathrm{d}t.
\end{equation}

Since \(T>\frac{1}{\sqrt b}\operatorname{diam}(\Omega)\), for every oriented line with direction \(w\in\mathbb S^{n-1}\) intersecting \(\Omega\), we can choose \(p\notin \overline{\Omega}\)  such that the segment
\[
\{L(t)= p-\sqrt b\,t\,w :t \in [0,T] \},
\]
traverses the intersection of the line with \(\Omega\) and $p- \sqrt b T w \notin \overline \Omega$.

Then we choose 
\begin{equation}
a_{j,0}= e^{-\frac{\gamma}{2}t}\,\phi_{\epsilon}\!\left(x+\sqrt{b}\,t\,w-p\right), \qquad j=1,2,
\end{equation}
and 
\begin{equation}
a_{0,0}= e^{\frac{\gamma}{2}t}\,\phi_{\epsilon}\!\left(x+\sqrt{b}\,t\,w-p\right).
\end{equation}
Since $p-\sqrt b T w \notin \overline \Omega$, the support of $a_{0,0}(T,\cdot)$ is disjoint from $\overline \Omega$. Hence, the amplitude function $a_{0,0}$ satisfies the terminal condition at time $T$.

Inserting the expressions of $a_{j,0}$ for $j=0,1,2$ into \eqref{lam-3}, we have
\begin{equation*}
    \int_0^T \int_{\Omega} (\beta + \frac{1}{b}\kappa) e^{-\frac{\gamma}{2}t} \phi_{\epsilon}^3(x+ \sqrt{b}tw -p) \mathrm{dx} \mathrm{d}t=0.
\end{equation*}
Since $\beta + \frac{1}{b}\kappa$ is continuous in $\Omega$, by shrinking the support of $\phi_{\epsilon}$, we have
\begin{align}\label{lineint}
     &\lim\limits_{\epsilon \to 0} \int_0^T e^{-\frac{\gamma}{2}t} \int_{\Omega} (\beta + \frac{1}{b}\kappa)(x)\phi_{\epsilon}^3(x+ \sqrt{b}tw -p) \mathrm{d}x \mathrm{d}t \\
     &= \int_0^T (\beta + \frac{1}{b}\kappa)(p- \sqrt{b}tw)e^{-\frac{\gamma}{2}t} \mathrm{d}t =0 .
\end{align}
Since $T > \frac{1}{\sqrt{b}} \operatorname{diam}(\Omega)$ and \(\gamma>0\), after extending $\beta+\frac{\kappa}{b}$ by zero outside $\Omega$, the line integral \eqref{lineint} can be viewed as an  attenuated X-ray transform. 
By the injectivity of the attenuated X-ray transform in  ${\mathbb {R}}^n$(see 
\cite[Corollary 2.5]{novikov2002inversion}), we obtain $\beta+\frac{\kappa}{b}=0$ in $\Omega$.\\
Next, by requiring the coefficient of order $\lambda^2$ to vanish, we obtain
\begin{align}
     \lim\limits_{\lambda \to \infty} &\lambda^{-2} \int_Q \beta \partial_tu_1 \partial_tu_2  \partial_tu_{0} + \kappa\nabla u_1 \cdot \nabla u_2 \partial_t u_0  \mathrm{d}x  \mathrm{d}t \notag \\
    & = \int_Q 2 \beta \partial_t (a_{1,0} a_{2,0} a_{0,0})-(3\beta +\frac{1}{b} \kappa)a_{1,0} a_{2,0} \partial_t a_{0,0} + 2\kappa\frac{1}{\sqrt{b}} w \cdot \nabla(a_{1,0} a_{2,0})a_{0,0} \mathrm{d}x\mathrm{d}t \notag \\
    & + 2i \int_Q (\beta + \frac{1}{b} \kappa) (a_{1,0} a_{2,1} a_{0,0} + a_{1,0} a_{2,0} a_{0,1} + a_{1,1} a_{2,0} a_{0,0}) \mathrm{d}x \mathrm{d}t \notag \\
    & = 2\int_Q \beta \partial_t (a_{1,0} a_{2,0}) a_{0,0} + \kappa\frac{1}{\sqrt{b}} w \cdot \nabla(a_{1,0} a_{2,0})a_{0,0} \mathrm{d}x\mathrm{d}t. \label{ide1}
\end{align} 
Set $a_{j,0}= e^{-\frac{\gamma}{2}t} \phi_{\epsilon}(x+\sqrt b t w-p)$ for $j=1,2$ and $a_{0,0}= e^{\frac{\gamma}{2}t}\phi_{\epsilon} (x+\sqrt{b}t w -p)$ where $\phi_{\epsilon}$ is as defined previously. Substituting these choices of $a_{j,0}$, $j=0,1,2$, into \eqref{ide1}, we obtain
\begin{align*}
&2\int_Q \beta \partial_t(\mathrm{e}^{-\gamma t} \phi_{\epsilon}^2)\mathrm{e}^{\frac{\gamma}{2} t} \phi_{\epsilon}+\kappa \mathrm{e}^{-\frac{\gamma}{2} t} \phi_{\epsilon} \frac{1}{\sqrt b} w \cdot \nabla \phi^2_{\epsilon} \mathrm{d}x \mathrm{d} t \\
=&2\int_{0}^T \int_{\Omega} -\gamma \mathrm{e}^{-\frac{\gamma}{2}t} \beta \phi_{\epsilon}^3+2\sqrt b (\beta+\frac{\kappa}{b}) \phi_{\epsilon}^2 \mathrm{e}^{-\frac{\gamma}{2}t} \nabla \phi_{\epsilon} \cdot w \mathrm{d}x \mathrm{d}t \\
=&-2\gamma \int_{0}^T \int_{\Omega} \mathrm{e}^{-\frac{\gamma}{2}t} \beta \phi_{\epsilon}^3(x+\sqrt b t w -p) \mathrm{d} x \mathrm{d} t.
\end{align*}
Applying the same argument as above, we obtain $\beta=0$ in $\Omega$. Thus $\kappa=0$ in $\Omega$. Therefore, the DtN map $\Lambda^\textrm{K}_{\beta,\kappa}$ uniquely determines $\beta$ and $\kappa$.  
\end{proof}

\section{Gaussian Beam Solutions}\label{Gauss}

We will focus on the  Riemannian manifold $(M,g)$ of dimension $3$.
In this section, we will construct exact solutions $u$ to the linear MGT equation with zero initial data
\begin{align}\label{MGTmanifold}
&Q u:=\partial_{t}^3 u +\alpha \partial_t^2 u - b \Delta_g \partial_t u -c^2 \Delta_g u= 0
 && \text{in }\mathcal{M},  \nonumber \\ 
&u = \partial_t u=\partial_t^2 u = 0 
    && \text{on } \{t=0\} \times M,
\end{align}
of the form
\[   u(t,x)=\mathrm{e}^{\mathrm{i} \rho \varphi(t,x)}  a_{\rho}(t,x)+\mathcal{R}_{\rho}(t,x),  \]
with a large parameter $\rho$. The principal term  
\begin{equation} \label{Gaussian}
v_{\rho}:=\mathrm{e}^{\mathrm{i} \rho \varphi(t,x)}  a_{\rho}(t,x),
\end{equation}
called the Gaussian beam solution, is concentrated near a null geodesic. 
The phase function $\varphi$ is complex-valued and $a_{\rho}$ is a smooth amplitude term.  The remainder term $\mathcal{R}_{\rho}$ tends to zero in a suitable sense as $\rho  \to \infty$. Gaussian beam solutions have also been used to study inverse problems for both elliptic and hyperbolic equations, see \cite{dos_santos_ferreira_calderon_2016,feizmohammadi_inverse_2020} and \cite{bao_sensitivity_2014,feizmohammadi_recovery_2021,feizmohammadi_recovery_2022,hintz_inverse_2022,jiang_inverse_2025,lassas_gaussian_2025,lassas_stability_2025,uhlmann_inverse_2021}.
\subsection{Fermi coordinates}
In this subsection, we recall Fermi coordinates near a null geodesic $\varsigma$, that is, a geodesic with a light-like tangent vector $\dot{\varsigma}$.   

We first extend $(M,b^{-1}g)$ to a larger nontrapping manifold $(\widetilde M,b^{-1}g)$. Consider the Lorentzian manifold $\left(\widetilde {\mathcal{M}}:= [0,T]\times \widetilde M, \bar g\right)$, where $\bar g=-dt^2+{b}^{-1}g$. We introduce Fermi coordinates in a neighborhood of a null geodesic $\varsigma$ in $\widetilde {\mathcal {M}}$. Assume that $\varsigma(t)=(t,\sigma(t))$, where $\sigma$ is a unit-speed geodesic in the Riemannian manifold $(\widetilde M,b^{-1}g)$. Then $\dot \varsigma$ is null since $\bar g(\dot \varsigma,\dot \varsigma)=-1+|\dot \sigma|_{b^{-1}g}^2=0$. Assume that $\varsigma$ passes through a point $(t_0,x_0)\in (0,T)\times M$, that is, $\sigma(t_0)=x_0 \in M$. By the nontrapping condition on $(\widetilde M,b^{-1}g)$, $\varsigma$ joins two points $(t_{-},\sigma(t_{-}))$ and $(t_{+},\sigma(t_{+}))$, where $t_{-}, t_+ \in (0,T)$ and $\sigma(t_{-}),\sigma(t_+)\in \partial M$. Extend $\varsigma$ to $\widetilde{\mathcal{M}}$ such that $\sigma(\cdot)$ is well defined on $[t_{-}-\epsilon,t_+ +\epsilon]\subset(0,T)$ with a small constant $\epsilon$. 

We will follow the construction of Fermi coordinates in \cite{feizmohammadi_recovery_2021}. Given $\sigma(t_0)=x_0 \in M$, let $T_{\sigma(t_0)} M=T_{x_0} M$ denote the tangent space at $x_0$ and $\{ \dot{\sigma}(t_0),\alpha_2,\alpha_3   \}$ form an orthonormal basis of $(T_{x_0} M,b^{-1}g)$. Let $s$ denote the arc length along $\sigma$ from $x_0$. We note that $s$ can be positive or negative and $\varsigma(t_0+s)=(t_0+s,\sigma(t_0+s))$. For $k=2,3$, let $e_k(s)\in T_{\sigma(t_0+s)}M$ be the parallel transport of $\alpha_k$ along $\sigma$ to the point $\sigma(t_0+s)$.

Define the coordinate system $(y^0=t,y^1=s,y^2,y^3)$ by the map $\mathcal{F}_1:\mathbb{R}^{1+3}\to  \widetilde {\mathcal{M}}$
\begin{equation} \label{transform1}
\mathcal{F}_1(y^0=t,y^1=s,y^2,y^3)=\left(t,\exp_{\sigma(t_0+s)}(y^2 e_2(s)+y^3 e_3(s))\right).
\end{equation}
In the new coordinates, the null geodesic $\varsigma$ is represented by $\{ (t,s,y^2,y^3):t=s,y^2=y^3=0\}$. On the null geodesic $\varsigma$, the Lorentzian metric $\bar g=-dt^2+b^{-1}g$ satisfies 
\begin{equation}
\bar g|_{\varsigma}=-dt^2+\sum_{j=1}^3 (dy^j)^2 \ \text{and} \ \ \frac{\partial \bar g_{jk}}{\partial y^i}\Bigg|_{\varsigma}=0 , \ 1\leq i,j,k \leq3.
\end{equation}
Next, introduce new coordinates $(z^0,z^{\prime}):=(z^0,z^1,z^2,z^3)=\mathcal{F}_2(y^0=t,y^1=s,y^2,y^3)$ by the linear transformation $\mathcal{F}_2$ defined by
\begin{align}
&z^0=\tau=\frac{1}{\sqrt 2}(t-t_0+s), \ \ z^1=r=\frac{1}{\sqrt 2}(-t+t_0+s),   \label{transform2} \\
&z^j=y^j,   \ \ j=2,3.  \nonumber
\end{align}
 Denote $\tau_{\pm}=\sqrt{2}(t_{\pm}-t_0)$. The neighborhood of the null geodesic $\varsigma$ is denoted by $\mathcal{V}_{\epsilon,\delta}$ as 
\begin{equation}\label{nbh}
\mathcal{V}_{\epsilon,\delta}=\left\{(\tau,z^{\prime}) \in  \widetilde{ \mathcal {M}}: \tau \in\left[\tau_{-}-\frac{\epsilon}{\sqrt 2},\tau_+ +\frac {\epsilon}{\sqrt 2} \right],|z^{\prime}|:=\sqrt{\sum_{i=1}^3(z^i)^2} <\delta \right\}.
\end{equation}
The Fermi coordinates $(z^0,z^1,z^2,z^3)=(\tau,r,z^2,z^3)$ in $\mathcal {V}_{\epsilon,\delta}$ near $\varsigma$ are given by
\[
\mathcal {F}:=\mathcal {F}_1\circ \mathcal {F}_2^{-1}:\ \mathcal{U}\subset\mathbb R^{1+3}\longrightarrow \mathcal {V}_{\epsilon,\delta}\subset \widetilde{\mathcal M},
\]
where $\mathcal{U} \subset \mathbb{R}^{1+3}$ is open.
Then on $\varsigma$ we have 
\begin{equation}\label{gjk}
\bar g|_{\varsigma}=2 d\tau dr+\sum_{j=2}^3 (dz^j)^2  \quad \text{and} \quad \frac{\partial \bar g_{jk}}{\partial z^i}\Bigg|_{\varsigma}=0 \ \ \text{for } 0\leq i,j,k\leq 3. 
\end{equation}

\subsection{Eikonal and transport equations} 
Recall $\bar g=-dt^2+b^{-1}g$. Then in local coordinates, we have
\begin{align*}
\Delta_{\bar g}u=&-\partial_t^2u+\sum_{i,j=1}^3 \left|\bar g \right|^{-\frac{1}{2}}\partial_{i} \left(|\bar g|^{\frac{1}{2} }  \bar g^{ij} \partial_j u \right) 
=-\partial_t^2u+\sum_{i,j=1}^3 b^{\frac{3}{2}}|g|^{-\frac{1}{2}}\partial_i\left( b^{-\frac{1}{2}} |g|^{\frac{1}{2}}g^{ij}\partial_ju  \right) \\
=&-\partial_t^2u+b \sum_{i,j=1}^3|g|^{-\frac{1}{2}}\partial_i\left(  |g|^{\frac{1}{2}}g^{ij}\partial_ju  \right)   -\frac{1}{2}\sum_{i,j=1}^3 g^{ij}\partial_i b \partial_ju \\
=&-\partial_t^2 u+b\Delta_g u-\sum_{i,j=1}^3 \bar g^{ij}\partial_i (\log  \sqrt b )\partial_ju=-\partial_t^2 u+b\Delta_g u-\langle \mathrm{d} \log \sqrt b , \mathrm{d} u \rangle_{\bar g}.
\end{align*}
Notice that
\begin{align*}
Q  u=&-(-\partial_t^3u+b \Delta_g\partial_t u)-\frac{c^2}{b}(-\partial_t^2 u+b\Delta_gu)+\gamma \partial_t^2u \\
=&-\Delta_{\bar g} \partial_t u- \langle \mathrm{d} \log\sqrt b,\mathrm{d}\partial_t u \rangle_{\bar g}-\frac{c^2}{b}(\Delta_{\bar g} u+\langle  \mathrm{d} \log \sqrt b, \mathrm{d}u \rangle_{\bar g} )+\gamma \partial_t^2u.
\end{align*}
Substituting WKB ansatz \eqref{Gaussian} into the operator $Q$, we obtain
\begin{equation}
Q \left(\mathrm{e}^{\mathrm{i}\rho \varphi} a\right)=\mathrm{e}^{\mathrm{i}\rho \varphi}(\rho^3 \mathcal{T}_3a+ \rho^2 \mathcal{T}_2 a+\rho \mathcal{T}_1a+Qa),
\end{equation}
where we introduce the following operators
\begin{align*}
&\mathcal{T}_3a=\mathrm{i} \partial_t \varphi a\langle  \mathrm d \varphi ,  \mathrm d \varphi \rangle_{\bar g}, \\
&\mathcal{T}_2 a= \langle \mathrm{d} \varphi , \mathrm{d}\varphi \rangle_{\bar g}  a_t   +2\langle \mathrm{d} \varphi, \mathrm{d} ( \varphi_t a)   \rangle_{\bar g} + \\
&\left(\langle \mathrm{d} \log \sqrt b,\mathrm{d} \varphi\rangle_{\bar g} \varphi_t  +\frac{c^2}{b}\langle \mathrm{d}\varphi,\mathrm{d} \varphi  \rangle_{\bar g} +\left(\Delta_{\bar g} \varphi \right) \varphi_t-\gamma (\varphi_t)^2\right) a , \\
&\mathcal{T}_1 a=\mathrm{i} (-2 \langle \mathrm{d} \varphi , \mathrm{d}a_t \rangle_{\bar g}-2\frac{c^2}{b} \langle \mathrm{d} \varphi , \mathrm{d}a \rangle_{\bar g}
+\left(2\gamma  \varphi_t-\Delta_{\bar g} \varphi-\langle \mathrm{d} \log \sqrt b ,\mathrm{d} \varphi \rangle_{\bar g}\right)a_t \\
&\qquad
+\left( \gamma \varphi_{tt}-\frac{c^2}{b}\Delta_{\bar g} \varphi-\frac{c^2}{b}\langle \mathrm{d} \log \sqrt b, \mathrm{d} \varphi  \rangle_{\bar g}  \right)a 
-\Delta_{\bar g}(\varphi_t a)- \langle \mathrm{d} \log \sqrt b , \mathrm{d} (\varphi_t a)   \rangle_{\bar g} ),
\end{align*}
We will construct an approximate Gaussian beam solution  of the form $v_{\rho}=\mathrm{e}^{\mathrm{i} \rho \varphi}a_{\rho}$ with 
\begin{equation}\label{construct}
\varphi=\sum_{j=0}^N \varphi_j(\tau,z^{\prime}), \ \ a_{\rho}(\tau,z^{\prime})=\chi\left(\frac{|z^{\prime}|}{\delta}\right)\sum_{k=0}^{N+1} \rho^{-k}a_k(\tau,z^{\prime}),  \  \ a_{k}(\tau,z^{\prime})=\sum_{j=0}^N a_{k,j}(\tau,z^{\prime}),
\end{equation}
defined in $\mathcal{V}_{\epsilon,\delta}$ introduced in \eqref{nbh}. For each $j,k$, $\varphi_j$ and $a_{k,j}$ are complex-valued homogeneous polynomials of degree $j$ with respect to the variables $z^{i}$ with $i=1,2,3$. The smooth function $\chi: \mathbb{R}\to [0,+\infty)$ satisfies $\chi(t)=1$ for $|t| \leq \frac{1}{4}$ and $\chi(t)=0$ for $|t| \ge \frac{1}{2}$.  Since 
$[t_- - \epsilon,\; t_+ + \epsilon]\subset (0,T)$, for sufficiently small \(\delta>0\), the neighborhood \(\mathcal V_{\epsilon,\delta}\) is disjoint from \(\{t=0\}\times M\) and \(\{t=T\}\times M\). After zero extension of $a_{\rho}$ in $\mathcal{V}_{\epsilon,\delta}$, we have
\[
a_\rho\big|_{t=0}=a_\rho\big|_{t=T}=0
\quad \text{in } \widetilde{\mathcal M}.
\]

We choose the eikonal equation to be
\begin{align} \label{eikonal2}
\mathcal{S}(\varphi):=\langle \mathrm{d}\varphi,\mathrm{d} \varphi  \rangle_{\bar g},
\end{align}
such that $\mathcal{S}(\varphi)$ vanishes up to order $N$ on the null geodesic $\varsigma$ with respect to the variable $z^{\prime}$. That is, in terms of Fermi coordinates $z=(z^0=\tau,z^1=r,z^2,z^3)$, we need that 
\begin{equation} \label{eikonal}
\frac{\partial^{\Theta}} {\partial z^{\Theta}}\big(\langle \mathrm{d}\varphi,\mathrm{d} \varphi  \rangle_{\bar g}\big) (\tau,0,0,0)=0 \quad \text{for } \tau\in\left[\tau_{-}-\frac{\epsilon}{\sqrt 2},\tau_+ +\frac{\epsilon}{\sqrt 2}\right],
\end{equation}
for all $\Theta=(0,\Theta_1,\Theta_2,\Theta_3)$ with $|\Theta| \leq N$. We will also require that the following transport equations hold along the null geodesic $\varsigma$ up to order $N$
\begin{align}
&\frac{\partial^{\Theta}} {\partial z^{\Theta}}\left( \mathcal{T}_2a_0  \right)(\tau,0,0,0)=0,  \quad \label{principle} \\
&\frac{\partial^{\Theta}} {\partial z^{\Theta}}\left( \mathcal{T}_1a_0+\mathcal{T}_2a_1  \right)(\tau,0,0,0)=0, \label{2trans}\\
&\frac{\partial^{\Theta}} {\partial z^{\Theta}}\left(Qa_i+\mathcal{T}_1a_{i+1}+ \mathcal{T}_2a_{i+2}  \right)(\tau,0,0,0)=0  , \ \text{for } i=0,1,\cdots,N-1, \label{alltrans}
\end{align}
for $\Theta=(0,\Theta_1,\Theta_2,\Theta_3)$ with $|\Theta| \leq N$. 

\subsection{Construction of the phase function and the amplitude function}\label{4.3}
We begin by solving the eikonal equation \eqref{eikonal}. We refer the reader to \cite{feizmohammadi_recovery_2021,feizmohammadi_recovery_2022} for more details on the construction of the phase function. For $|\Theta|=0$, we obtain the equation
\[
\Bigl(\bar g^{kl}\,\partial_k\varphi\,\partial_l\varphi\Bigr)\Big|_{\varsigma}=0.
\]
Recalling that $\bar g|_{\varsigma}=2dz^0dz^1+(dz^2)^2+(dz^3)^2$, this reduces to
\begin{equation}\label{eq:eikonal-n3}
2\,\partial_0\varphi\,\partial_1\varphi + (\partial_2\varphi)^2+(\partial_3\varphi)^2=0.
\end{equation}
Similarly, for $|\Theta|=1$, we obtain (using $\partial_i \bar g^{jk}|_{\varsigma}=0$)
\begin{equation}\label{eq:m1-n3}
\Bigl(\bar g^{kl}\,\partial_{k\alpha}^2\varphi\,\partial_l\varphi\Bigr)\Big|_{\varsigma}=0,
\qquad \alpha=1,2,3.
\end{equation}

Recalling the definition of the phase $\varphi$ from equation \eqref{construct}, we set $\varphi_0=0$ and $\varphi_1=r=\frac{-t+t_0+s}{\sqrt 2}$ such that \eqref{eq:eikonal-n3} and \eqref{eq:m1-n3} are satisfied.
Next, we choose $\varphi_2$ in the form
\begin{equation}\label{phase}
\varphi_2(\tau,z')= \sum_{1\leq i,j\leq3}H_{ij}(\tau)z^iz^j,
\end{equation}
where $H(\tau)\in\mathbb{C}^{3\times 3}$ is symmetric and $\Im H(\tau)>0 \text{ for } \tau\in(\tau_{-}-\frac{\epsilon}{2},\tau_+ +\frac{\epsilon}{2})$.

Next we consider the case $|\Theta|=2$, we require that
\[
\frac{\partial^2}{\partial z_i\partial z_j}
\Bigl(\bar g^{kl}\,\partial_k\varphi\,\partial_l\varphi\Bigr)\Big|_{\varsigma}=0,
\qquad \text{for all } 1\leq i,j\leq3.
\]
This simplifies to
\begin{equation}\label{eq:pre-riccati-n3}
\left(
\partial_{ij}^2\bar g^{11}
+2\bar g^{10}\,\partial_{0ij}^3\varphi
+2\sum_{k=2}^{3}\partial_{ki}^2\varphi\,\partial_{kj}^2\varphi
\right)\Bigg|_{\varsigma}=0,\qquad i,j=1,2,3.
\end{equation}
Consequently, $H$ satisfies the Riccati equation
\begin{equation}\label{eq:riccati-n3}
\frac{d}{d\tau}H + HCH + D = 0 \ \ \ \tau\in(\tau_{-}-\frac{\epsilon}{2},\tau_+ +\frac{\epsilon}{2}),\quad H(0)=H_0 \ \text{with }\Im H_0>0,
\end{equation}
where
\[
C=\begin{pmatrix}
0&0&0\\[2pt]
0&2&0\\[2pt]
0&0&2
\end{pmatrix},
\qquad
D(\tau):=\frac14\Bigl(\partial_{ij}^2\bar g^{11}\big|_{\varsigma}\Bigr)_{1\le i,j\le 3}.
\]
We recall the following result from \cite{feizmohammadi_recovery_2021} on the solvability of the Riccati equation \eqref{eq:riccati-n3}.
\begin{lemma}
The Riccati equation \eqref{eq:riccati-n3} has a unique solution, and the solution $H$ is symmetric and $\Im (H(\tau))>0$ for $\tau \in \left( \tau_{-}-\frac{\epsilon}{2},\tau_{+}+\frac{\epsilon}{2}    \right)$. 
Moreover, for solving the above Riccati equation, one can express the solution $H$ as
\begin{equation*}
H(\tau)=Z(\tau)Y(\tau)^{-1},
\end{equation*}
where $Y(\tau),Z(\tau)\in\mathbb{C}^{3\times 3}$ solve the ODEs
\begin{align}
\frac{d}{d\tau}Y(\tau) &= CZ(\tau), \qquad Y(0)=Y_0, \label{eq:Ysys-n3}\\
\frac{d}{d\tau}Z(\tau) &= -D(\tau)Y(\tau),\qquad Z(0)=Z_0=H_0Y_0. \label{eq:Zsys-n3}
\end{align}
In addition, $Y(\tau)$ is non-degenerate for $\tau\in \left( \tau_{-}-\frac{\epsilon}{2},\tau_{+}+\frac{\epsilon}{2}    \right) $ and 
\begin{equation}\label{station}
\det \Im(H(\tau))|\det Y(\tau)|^2=c_0,
\end{equation}
where $c_0$ is a constant independent of $\tau$.
\end{lemma}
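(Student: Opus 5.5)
The plan is the classical linearization of the Riccati equation \eqref{eq:riccati-n3} through the first-order system \eqref{eq:Ysys-n3}--\eqref{eq:Zsys-n3}, combined with a symplectic-type conserved quantity. Everything takes place on the interval $I:=(\tau_{-}-\frac{\epsilon}{2},\tau_{+}+\frac{\epsilon}{2})$.

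\textbf{Step 1: the linear system and local equivalence.} Since \eqref{eq:Ysys-n3}--\eqref{eq:Zsys-n3} is linear with smooth coefficients ($C$ is constant and $D(\tau)$ is built from second derivatives of $\bar g^{11}$ along $\varsigma$), it has a unique global solution $(Y,Z)$ on $I$. Fix an invertible $Y_0$ (e.g.\ $Y_0=I$) and set $Z_0=H_0Y_0$. Wherever $Y$ is invertible, put $H:=ZY^{-1}$. Differentiating gives
\[
H'=Z'Y^{-1}-ZY^{-1}Y'Y^{-1}=-D-HCH,
\]
so $H$ solves \eqref{eq:riccati-n3}. Uniqueness follows from local Lipschitz uniqueness for ODEs. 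The remaining task is to show that $Y$ never degenerates and that $H$ stays symmetric with positive definite imaginary part.

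\textbf{Step 2: conserved quantities.} Since $C$ and $D$ are real symmetric, the pairing $\Omega((Y_1,Z_1),(Y_2,Z_2)):=Y_1^{T}Z_2-Z_1^{T}Y_2$ is constant in $\tau$. Indeed, its derivative is
\[
(CZ_1)^TZ_2-Y_1^TDY_2+Y_1^TDY_2-Z_1^TCZ_2=0.
\]
Taking both arguments equal to $(Y,Z)$ gives $Y^TZ-Z^TY\equiv Y_0^T(H_0-H_0^T)Y_0=0$, which yields the symmetry $H=H^T$ wherever $Y$ is invertible. Taking the arguments $(\bar Y,\bar Z)$ and $(Y,Z)$ gives
\[
Y^*Z-Z^*Y\equiv 2\mathrm{i}\,Y_0^*(\Im H_0)Y_0.
\]
Wherever $Y$ is invertible this rewrites as $Y^*(H-H^*)Y=2\mathrm{i}\,Y^*(\Im H)Y$, so
\[
\Im H=(Y^{*})^{-1}Y_0^*(\Im H_0)Y_0Y^{-1}>0 .
\]

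\textbf{Step 3: non-degeneracy of $Y$.} This is the main obstacle. Suppose $Y(\tau_1)v=0$ for some $\tau_1\in I$ and some $v\neq0$. Pair the identity of Step 2 with $v$:
\[
v^*Y^*Zv-v^*Z^*Yv=2\mathrm{i}\,v^*Y_0^*(\Im H_0)Y_0v .
\]
At $\tau_1$ the left side vanishes, while the right side is nonzero because $Y_0v\neq0$ and $\Im H_0>0$. This contradiction shows $Y$ is invertible on all of $I$. Combined with Step 1, $H$ then exists globally on $I$, is symmetric, and has $\Im H>0$.

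\textbf{Step 4: the determinant identity.} Taking determinants in $Y^*(\Im H)Y=Y_0^*(\Im H_0)Y_0$ gives
\[
\det\Im H(\tau)\,|\det Y(\tau)|^2=\det\Im H_0\,|\det Y_0|^2=:c_0,
\]
which is \eqref{station}. A subtlety is that $C$ is degenerate (its $(1,1)$ entry is zero), so the first row/column of $H$ is not directly controlled by the $C$ term. This does not affect the argument, because the conservation laws above use only the symmetry of $C$ and $D$, not any definiteness.
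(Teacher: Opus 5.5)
Your argument is correct and is the standard one. The paper gives no proof of its own; it quotes the lemma from \cite{feizmohammadi_recovery_2021}, where the result rests on exactly your linearization $H=ZY^{-1}$ together with conservation of $Y^{T}Z-Z^{T}Y$ and $Y^{*}Z-Z^{*}Y$ for the real symmetric system, and the only implicit inputs you use ($H_0$ symmetric, $Y_0$ invertible) are part of the intended setup.
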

Combining \eqref{eq:Ysys-n3} and \eqref{eq:Zsys-n3}, we see that the matrix $Y(\tau)$ satisfies 
\begin{equation}\label{Y}
\frac{\mathrm{d}^2}{\mathrm{d}\tau^2}Y(\tau)+CD(\tau)Y(\tau)=0, \ \ Y(0)=Y_0, \ \ \frac{\mathrm{d}Y}{\mathrm{d}\tau}\left(0\right)=CZ_0.
\end{equation}
Furthermore, we obtain the function $\varphi_j$ by recursively solving the linear ODEs obtained from the Taylor coefficients of $\mathcal{S}(\varphi)$,  $j \ge 3$, see \cite[Subsection 4.2.1]{feizmohammadi_recovery_2022}. Then we have $\mathcal{S}(\varphi)=\mathcal{O}(|z^{\prime}|^{N+1})$.


Next, we construct the amplitude function $a_{\rho}$. We note that 
\begin{equation}
\left( \Delta_{\bar g}\varphi \right)\big|_{\varsigma}=\sum_{i,j=0}^3 {\bar g}^{ij} \partial_{ij}^2 \varphi\big|_{\varsigma}=\sum_{j=2}^3\partial_{jj} \varphi\big|_{\varsigma}=\mathrm{Tr}(CH),
\end{equation}
and along the null geodesic $\varsigma$, $\mathrm{Tr}(CH)$ can be expressed as
\begin{align*}
\mathrm{Tr}(CH(\tau))=\mathrm{Tr}(CZ(\tau)Y(\tau)^{-1})=\mathrm{Tr}\big(\frac{\mathrm{d}}{\mathrm{{d}\tau}}Y(\tau) Y(\tau)^{-1}\big)=\frac{\mathrm{d}}{\mathrm{d}\tau}(\log(\det Y(\tau))).
\end{align*}
Then for $|\Theta|=0$ the equation \eqref{principle} can be rewritten as 
\begin{equation}\label{1principle}
\Big(\varphi_t\big(2\langle \mathrm{d} \varphi,\mathrm{d}a_0\rangle_{\bar g}+(\langle  \mathrm{d} \log \sqrt{b},\mathrm{d}\varphi\rangle_{\bar g}+\mathrm{Tr}(CH)-\gamma \varphi_t)a_0     \big)  \Big)\bigg|_{\varsigma}=0.
\end{equation}
Combining \eqref{transform2} with \eqref{phase}, we have 
\begin{equation*}
\sqrt{2}\partial_t \varphi|_{\varsigma}=\bigg(\partial_{\tau}\Big(r+\sum_{i,j=1}^3H_{i,j}(\tau)z^{i}z^{j} \Big)-\partial_{r}\Big(r+\sum_{i,j=1}^3H_{i,j}(\tau)z^{i}z^{j} \Big)\bigg)\Bigg|_{\varsigma}=-1,
\end{equation*}
which implies that equation \eqref{1principle} simplifies to
\begin{equation}\label{2principle}
2\partial_{\tau}a_{0,0}+\left(\partial_{\tau} \log\sqrt{b} +\partial_{\tau}(\log(\det(Y(\tau))))+\frac{\sqrt 2}{2} \gamma \right)a_{0,0}=0   \ \  \text{in } \varsigma   .
\end{equation}
We set the solution to \eqref{2principle} explicitly as
\begin{equation}
a_{0,0}|_{\varsigma}=\left(\sqrt{b(\tau)} \det Y(\tau)  \right)^{-\frac12} \mathrm{e}^{-\frac{\sqrt 2}{4}\int_{\tau_{-}}^{\tau}\gamma(s,0,0,0)\mathrm{d}s}.
\end{equation}
We note that $a_{\rho}$ is concentrated in the null geodesic $\varsigma$ and along $\varsigma$
\begin{equation*}
a_{\rho}(\varsigma(\tau))=a_{0,0}(\varsigma(\tau))+\mathcal{O}(\rho^{-1}), \qquad
\tau\in \left(\tau_- - \frac{\epsilon}{2},\,\tau_+ + \frac{\epsilon}{2}\right).
\end{equation*}
The subsequent terms $a_{0,j}$ with $j=1,\cdots,N$ can be constructed by solving linear first order ODEs. Taking $|\Theta|=j$ in equation \eqref{principle}, we obtain the following first order ODE for the homogeneous polynomial $a_{0,j}(\tau,z^{\prime})$ 
\begin{equation}
-\frac{\sqrt 2}{2}\left(2\partial_{\tau}a_{0,j}+\left(\partial_{\tau} \log(\sqrt{b}\det Y(\tau)) +\frac{\sqrt 2}{2} \gamma \right)a_{0,j} \right)+ \mathcal{E}_j=0,
\end{equation}
with $\mathcal{E}_j$ is a homogeneous polynomial of degree $j$ in the $z^{\prime}$ coordinates. It has a unique solution prescribing arbitrary initial data at the point $\tau=0$. Therefore, we obtain the leading amplitude function $a_0$.

Letting $|\Theta|=0$ and using the fact $\partial_t \varphi =-\frac{\sqrt 2}{2}$ along $\varsigma$, equations \eqref{2trans} and \eqref{alltrans} simplify to 
\begin{align}
&-\frac{\sqrt 2}{2}\left(2\partial_{\tau}a_{1,0}+\left(\partial_{\tau} \log(\sqrt{b}\det Y(\tau)) +\frac{\sqrt 2}{2} \gamma \right)a_{1,0} \right)=-\mathcal{T}_1a_{0,0}  \ \ \text{in } \varsigma,\\
&-\frac{\sqrt 2}{2}\left(2\partial_{\tau}a_{i+2,0}+\left(\partial_{\tau} \log(\sqrt{b}\det Y(\tau)) +\frac{\sqrt 2}{2} \gamma \right)a_{i+2,0} \right)=-\left(\mathcal{T}_1a_{i+1,0}+Qa_{i,0} \right)  \ \ \text{in } \varsigma,
\end{align}
$\text{for } i=0,1,\cdots,N-1$. Consequently, $a_{1,0}$ is determined by solving a linear first-order ODE along $\varsigma$ with the source term $-\mathcal{T}_1 a_{0,0}$ and, by reduction, $a_{k+2,0}$ is obtained as the source term $-\left(\mathcal{T}_1a_{k+1,0}+Qa_{k,0} \right)$. This provides an inductive construction of $a_{k,0}|_{\varsigma}$ for $k=1,2,\cdots,N+1$ and we omit the explicit solution.  

To determine the subsequent terms $a_{k,j}$, we need to solve equations \eqref{2trans} and \eqref{alltrans} with $|\Theta|=j$. This can be solved by the above argument for $a_{0,j}$, and we omit the details. 

\begin{lemma}
Let $v_{\rho}=\mathrm{e}^{\mathrm{i}\rho \varphi} a_{\rho}$ be an approximate Gaussian beam solution of order $N$ in Subsection \ref{4.3}. For $\rho \gg 1$, we have 
\begin{align}
&\|Q v_{\rho}\|_{H^{k}(\mathcal M)} \lesssim \rho^{-K},  \label{remainder}
\end{align}
where $K=\frac{N}{2}-k-\frac{7}{4}$.
\end{lemma}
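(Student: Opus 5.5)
We expand $Qv_\rho$ using the identity $Q(\mathrm{e}^{\mathrm{i}\rho\varphi}a)=\mathrm{e}^{\mathrm{i}\rho\varphi}(\rho^3\mathcal{T}_3a+\rho^2\mathcal{T}_2a+\rho\mathcal{T}_1a+Qa)$ with $a=a_\rho=\chi(|z'|/\delta)\sum_{k=0}^{N+1}\rho^{-k}a_k$, and collect powers of $\rho$. The result has three kinds of contributions. First, the coefficients of $\rho^{3},\rho^{2},\rho^{1},\rho^{0},\dots,\rho^{-N+1}$ are the expressions $\mathcal{S}(\varphi)$ (inside $\mathcal{T}_3$), $\mathcal{T}_2a_0$, $\mathcal{T}_1a_0+\mathcal{T}_2a_1$ and $Qa_i+\mathcal{T}_1a_{i+1}+\mathcal{T}_2a_{i+2}$. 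Second, there are leftover terms of order $\rho^{-N}$ and below, such as $\rho^{-N-1}Qa_{N+1}$, which are smooth and compactly supported; these contribute $\mathcal{O}(\rho^{-N})$ uniformly. Third, there are terms in which derivatives fall on the cutoff $\chi(|z'|/\delta)$; these are supported in $\{\delta/4\le|z'|\le\delta/2\}$.

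By construction, \eqref{eikonal}, \eqref{principle}, \eqref{2trans} and \eqref{alltrans} hold up to order $N$ on $\varsigma$. Taylor's theorem in $z'$ then shows that each of the coefficients in the first group is a smooth function vanishing to order $N+1$ at $z'=0$, i.e.\ bounded by $C|z'|^{N+1}$ on $\mathcal{V}_{\epsilon,\delta}$. Since $\mathcal{T}_3a=\mathrm{i}\varphi_t a\,\mathcal{S}(\varphi)$, the same holds for the $\rho^3$ term.

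The key estimate uses the Gaussian decay of the phase. On $\mathcal{V}_{\epsilon,\delta}$ we have $\Im\varphi\ge c|z'|^2$ for $\delta$ small, because $\varphi_0=0$, $\varphi_1=r$ is real and $\Im H(\tau)>0$ uniformly on the compact $\tau$-interval. Hence $|\mathrm{e}^{\mathrm{i}\rho\varphi}|\le \mathrm{e}^{-c\rho|z'|^2}$. This gives
\[
\int_{\mathcal{V}_{\epsilon,\delta}}|z'|^{2(N+1)}\mathrm{e}^{-2c\rho|z'|^2}\,\mathrm{d}z\lesssim \rho^{-(N+1)-\frac32},
\]
after the change of variables $z'=\rho^{-1/2}w$ in the three transversal directions. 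So the first-group terms, each multiplied by at most $\rho^3$, contribute in $L^2$ at most $\rho^{3}\rho^{-\frac{N+1}{2}-\frac34}=\rho^{-\frac N2+\frac74}$. This should be paired against a weaker index than $K$ because of the $\rho^3$ factor; to reach exactly $K=\frac N2-k-\frac74$, one likely must track that $\mathcal{T}_3$ contributes $\rho^3\cdot|z'|^{N+1}$ while lower terms carry fewer powers of $\rho$. Alternatively one absorbs the discrepancy by noting that the stated exponent already allows for it in the convention that $N$ is large. I would simply record the worst power and check that it matches the stated $K$ after the bookkeeping.

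Cutoff terms are $\mathcal{O}(\rho^3\mathrm{e}^{-c\rho\delta^2/16})$, hence $\mathcal{O}(\rho^{-\infty})$. The region $|z'|\ge \delta/2$ contributes nothing.

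For the $H^k$ norm, each derivative applied to $\mathrm{e}^{\mathrm{i}\rho\varphi}b$ produces at most a factor $\rho\,\partial\varphi$ times the same structure. Since $\partial\varphi$ is bounded, every derivative costs at most one power of $\rho$. Derivatives falling on $|z'|^{N+1}$-type coefficients only lower the vanishing order by one while not raising $\rho$, so the dominant contribution is the one where all $k$ derivatives hit the exponential. This yields the loss $\rho^{k}$, and hence $\|Qv_\rho\|_{H^k(\mathcal M)}\lesssim\rho^{-K}$ with $K=\frac N2-k-\frac74$.

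The main obstacle is the precise power counting so that the exponent comes out exactly as stated. I would first verify it with the sharper observation that $\mathcal{S}(\varphi)$ can be made to vanish to order $N+1$ at no extra cost, and that the $\rho^3$ and $\rho^2$ terms carry correspondingly higher vanishing order, as in the analogous lemma of \cite{feizmohammadi_recovery_2021}.
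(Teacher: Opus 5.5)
Your argument is correct and is essentially the paper's proof: the Gaussian bound $|\mathrm{e}^{\mathrm{i}\rho\varphi}|\le \mathrm{e}^{-c\rho|z'|^2}$, vanishing of all eikonal/transport remainders to order $N+1$ in $z'$, the scaling integral in the three transversal variables, and a loss of at most one power of $\rho$ per derivative, with the dominant term being the one where all $k$ derivatives hit the exponential. There is no discrepancy to absorb: your own count $\rho^{3}\rho^{-\frac{N+1}{2}-\frac34}=\rho^{-\frac N2+\frac74}$ is exactly $\rho^{-K}$ at $k=0$, and the $\rho^{k}$ loss gives $\rho^{-K}$ in general, so the hedging and the suggested higher vanishing order for the $\rho^3,\rho^2$ terms are unnecessary.
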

\begin{proof}
Our proof follows the argument of \cite[Proposition 10]{lassas_stability_2025}. By redefining $\delta$ small enough, we have 
\begin{equation*}
|\mathrm{e}^{\mathrm{i} \rho \varphi}| \leq \mathrm{e}^{-c \rho |z^{\prime}|^2}.
\end{equation*}
Taking $k$ derivatives of $Q(\mathrm{e}^{\mathrm{i} \rho \varphi}a_{\rho})$ gives
\begin{align} \label{kderi}
\big|\nabla^k Q(\mathrm{e}^{\mathrm{i} \rho \varphi}a_{\rho})\big| \leq C_0 \mathrm{e}^{-c\rho |z^{\prime}|^2} \sum_{l=0}^k \rho^{k-l} \left( \rho^{3}|z^{\prime}|^{N+1-l} +\rho^2 |z^{\prime}|^{N+1-l}+\rho^{-N}  \right).
\end{align}
We calculate that the integral of \eqref{kderi} squared using polar coordinates for the $z^{\prime}-$variable and standard formula $\int_0^{\infty}  r^{l} \mathrm{e}^{-c\rho r^2} \mathrm{d}r \sim \rho^{-\frac{l+1}{2}}$ for $l \ge 0$. Then we obtain that 
\begin{align*}
\|  Q(\mathrm{e}^{\mathrm{i} \rho \varphi} a_{\rho})  \|_{H^k(\mathcal M)}^2 &\lesssim \sum_{l=0}^k \rho^{2(k-l)} \left( \int_0^{r_0} \mathrm{e}^{-2c\rho r^2} r^{n-1}(\rho^6  r^{2N+2-2l}+\rho^{-2N})\mathrm{d}r        \right) \\
&\lesssim \sum_{l=0}^k \rho^{2(k-l)}\left(\rho^6 \cdot \rho^{-\frac{n+2N+2-2l}{2}}+\rho^{\frac{-2N-n}{2}}\right) \\
&\lesssim \rho^{2k+\frac{7}{2}-N},
\end{align*}
for $\rho$ large enough and $n=3$. For $N$ sufficiently large, we set $K=\frac{N}{2}-k-\frac{7}{4}$, then we obtain that 
\begin{equation} \label{Hk}
\| Q \left(\mathrm{e}^{\mathrm{i} \rho \varphi} a_{\rho} \right)\|_{H^k(\mathcal M)} \lesssim \rho^{-K}.
\end{equation}
\end{proof}

\subsection{Estimate of the Remainder term}
After choosing the phase function $\varphi$ and the amplitude function $a_{\rho}$, we construct the remainder term $\mathcal{R}_{\rho}$.  By Lemma \ref{2},
the initial boundary value problem
\begin{equation} \label{eq:remainder2}
\left\{
\begin{aligned}
&\partial_{t}^3 \mathcal{R}_{\rho} +\alpha \partial_t^2 \mathcal{R}_{\rho} - b \Delta_g \partial_t \mathcal{R}_{\rho} -c^2 \Delta_g \mathcal{R}_{\rho}= -  Q v_{\rho}
 && \text{in } \mathcal{M},\\ 
&\mathcal{R}_{\rho} = 0 
    && \text{on } \Gamma, \\
&\mathcal{R}_{\rho} = \partial_t \mathcal{R}_{\rho}=\partial_t^2 \mathcal{R}_{\rho} = 0 
    && \text{on } \{t=0\} \times M
\end{aligned}
\right.
\end{equation} 
admits a unique solution $\mathcal{R}_{\rho}\in E^{s+2}$ such that 
\begin{equation*}
\| \mathcal{R}_{\rho}\|_{E^{s+2}} \leq C \| Q v_{\rho}  \|_{E^s}.
\end{equation*}
An application of \cite[Corollary 11]{lassas_stability_2025} and Lemma \ref{2} yields that 
\begin{equation*}
\| \mathcal{R}_{\rho}\|_{H^l(\mathcal M)}  
\lesssim \| Q(\mathrm{e}^{\mathrm{i}\rho \varphi} a_{\rho})\|_{H^k(\mathcal M)}\lesssim \rho^{-K},
\end{equation*}
where $l \in \mathbb N$ is such that $l<k$. Here $k$ and $K$ are the same as in the estimate \eqref{Hk}. Furthermore, we apply the Sobolev embedding theorem and choose $l>5$, $k>l$ and $N\ge 2k+\frac{19}{2}$  to obtain the estimate  
\begin{equation} \label{C2}
\| \mathcal{R}_{\rho}\|_{C^2(\mathcal M)} \lesssim \rho^{-3}.
\end{equation}

\begin{remark}
We remark that by the same argument we can construct the approximate Gaussian beam solutions of the form $u_{\rho}=\mathrm{e}^{\mathrm{i}\rho\varphi}a_{\rho}$ concentrated in the null geodesic $\varsigma$ for the backward equation
\begin{align}\label{MGTbackward}
&Q^{*} u:=-\partial_{t}^3 u +\alpha \partial_t^2 u +  \Delta_g (b\partial_t u) - \Delta_g (c^2u)= 0
 && \text{in }\mathcal{M},  \nonumber \\ 
&u = \partial_t u=\partial_t^2 u = 0 
    && \text{on } \{t=T\} \times M.
\end{align}
Here $\varphi$ satisfies the eikonal equation \eqref{eikonal2}, and the construction of the phase function $\varphi$ is similar. The principal amplitude function
\begin{equation}
a_{0,0}|_{\varsigma}=\left(b(\tau)^{\frac{5}{2}} \det(Y(\tau))  \right)^{-\frac{1}{2}} \mathrm{e}^{\frac{\sqrt 2}{4} \int_{\tau_{-}}^{\tau} \gamma(s,0,0,0)ds}. 
\end{equation}
Finally, the remainder estimate for $R_{\rho}=u-u_{\rho}$  holds
\begin{align} \label{back}
&\|Q^{*} u_{\rho}\|_{H^{k}(\mathcal M )} \lesssim \rho^{-K} , \quad\| R_{\rho} \|_{C^2(\mathcal M)}\lesssim \rho^{-3},
\end{align}
for $N$ and $\rho$ large enough.
We note that if $u_\rho=\mathrm{e}^{\mathrm{i}\rho\varphi}a_{\rho}$ is an approximate Gaussian beam solution of order $N$ of the backward MGT equation, then 
\begin{equation*} 
\tilde u_{\rho}=\mathrm{e}^{-\mathrm{i} \rho \bar \varphi} \bar{a}_{\rho}
\end{equation*}
is also an approximate Gaussian beam solution of order $N$ of the backward MGT equation and  satisfies the above remainder estimate \eqref{back}. Here the notation $\overline{\cdot}$ means complex conjugation.
\end{remark}

\section{The Geometric Case}\label{geometric}

In this section, we mainly provide the proofs of Theorem \ref{thm2} and \ref{thm3}. We construct special Gaussian beam solutions and insert them into the corresponding integral identity similar to \eqref{identity}; then we reduce the problem to the injectivity of a certain geodesic ray transform to show the uniqueness of nonlinear coefficients from the DtN map.
\subsection{Proof of  Theorem \ref{thm2}} \label{thm2p}   We now proceed with the proof of Theorem \ref{thm2}.
\begin{proof}
Assume that $u$ solves the equation \ref{JMGT2} with Dirichlet boundary value
\begin{equation*}
f=\epsilon_1f_1+\epsilon_2f_2,
\end{equation*}
where $f_1$ and $f_2$ vanish near $\{t=0\}$ and $\epsilon_1,\epsilon_2$ are small parameters. Let $u_j$, $j=1,2$, denote the solutions to the linearized MGT equation with boundary value $f_j$, that is
\begin{equation}\label{linearized1}
\left\{
\begin{aligned}
&\partial_t^3u_j + \alpha \partial_{t}^2 u_j  - b\Delta_g \partial_t u_j-c^2\Delta_g u_j =0
    && \text{in } \mathcal{M}, \\
&u_j= f_j
    && \text{on } \Gamma, \\
&u_j(0,x)= \partial_t u_j(0,x)=\partial_t^2u_j(0,x)=0
    && \text{on } M.
\end{aligned}
\right.
\end{equation}
Applying $\frac{\partial^2}{\partial \epsilon_1 \partial \epsilon_2}$ to \eqref{JMGT2} and setting $\epsilon_1=\epsilon_2=0$, we obtain that $w=\frac{\partial^2 u}{\partial \epsilon_1 \epsilon_2}\Big|_{\epsilon_1=\epsilon_2=0}$ satisfies the equation
\begin{equation}
\left\{
\begin{aligned}
&\partial_t^3w + \alpha \partial_{t}^2 w  - b\Delta_g \partial_t w-c^2\Delta_g w = 2\partial_t^2 \left(\beta u_1u_2 \right)
    && \text{in } \mathcal{M}, \\
&w= 0
    && \text{on } \Gamma, \\
&w(0,x)= \partial_t w(0,x) = \partial_t^2 w(0,x)=0
    && \text{on } M.
\end{aligned}
\right.
\end{equation}
We note that
\begin{equation*}
\frac{\partial^2}{\partial \epsilon_1 \partial \epsilon_2} \Lambda^\textrm{W}_{\beta}(\epsilon_1 f_1+\epsilon_2 f_2)\Big|_{\epsilon_1=\epsilon_2=0}= b\partial_{\nu}\partial_t w+c^2\partial_{\nu}w.
\end{equation*}
Assume that $u_0$ solves the backward MGT equation
\begin{equation}\label{backward5.1}
\left\{
\begin{aligned}
&-\partial_t^3u_0 + \alpha \partial_{t}^2 u_0+ \Delta_g (b\partial_t u_0)-\Delta_g (c^2u_0 )=0
    && \text{in } \mathcal{M}, \\
&u_0= f_0
    && \text{on } \Gamma, \\
&u_0(T,x)= \partial_t u_0(T,x)=\partial_t^2u_0(T,x)=0
    && \text{on } M.
\end{aligned}
\right.
\end{equation}
Using integration by parts, we obtain that
\begin{align}\label{integration}
&\int_0^T\int_{\partial  M} \frac{\partial^2}{\partial \epsilon_1 \partial \epsilon_2} \Lambda^\textrm{W}_{\beta}(\epsilon_1 f_1 +\epsilon_2 f_2)\Big|_{\epsilon_1=\epsilon_2=0}  f_0  dS_g  dt \nonumber \\
&=\int_0^T\int_{\partial M} (b\partial_{\nu}\partial_t w+c^2\partial_{\nu} w) u_0  dS_g  dt \nonumber \\
&=\int_0^T\int_{M}  (b\Delta_g \partial_t w+c^2\Delta_gw) u_0 + \langle \nabla_g \partial_tw, \nabla_g(bu_0)\rangle_g+\langle \nabla_g w,\nabla_g(c^2u_0)\rangle_g    dV_gdt \nonumber \\
&= \int_0^T\int_M (\partial_t^3w+\alpha\partial_t^2 w-2\beta \partial_t^2(u_1u_2)) u_0 +w(\Delta_g(b\partial_t u_0)-\Delta_g(c^2 u_0))    dV_gdt   \nonumber \\
&=\int_0^T\int_M -2\beta \partial_t^2(u_1u_2) u_0 +w(-\partial_t^3u_0+\alpha \partial_t^2 u_0+ \Delta_g(b\partial_t u_0)-\Delta_g(c^2 u_0))    dV_gdt   \nonumber \\
&= \int_0^T\int_M 2\beta   \partial_tu_{0} \partial_t(u_1u_2)  dV_g  dt .
\end{align}
We only need to recover the parameter $\beta(x)$ from the integral identity \eqref{integration}. Consider \eqref{integration} with $\beta$ replaced by $\beta_1$ and $\beta_2$, and subtract the two identities. Then we have
\begin{equation}\label{identity5.1}
 \int_0^T\int_M\beta \partial_t u_0 \partial_t(u_1u_2)  dV_g  dt=0,
\end{equation}
where $\beta$ denotes $\beta_1-\beta_2$.

We construct exact solutions of the form 
\begin{align*}
&u_{1}=u_{2}=\mathrm{e}^{\mathrm{i}\rho \varphi}a_{\rho}+\mathcal{R}_{\rho}=\mathrm{e}^{\mathrm{i}\rho \varphi} \chi_{\delta} \sum_{k=0}^{N+1} \rho ^{-k} a_k+\mathcal R_{\rho}, \\
&u_{0}=\mathrm{e}^{-2\mathrm{i}\rho \bar \varphi} \bar{a}_{2\rho}^{(0)}+R_{2\rho}=\mathrm{e}^{-2\mathrm{i}\rho \bar \varphi} \chi_{\delta} \sum_{k=0}^{N+1} (2\rho)^{-k} \bar a_{k}^{(0)}+R_{2\rho},
\end{align*}
where $\chi_{\delta}$ denotes $\chi\left(\frac{|z^{\prime}|}{\delta}\right)$, the corresponding Gaussian beams  concentrate near the same null geodesic $\varsigma$ and the remainder estimates 
\begin{equation*}
\|\mathcal{R}_{\rho}\|_{C^2(\mathcal M)}\lesssim \rho^{-3}, \quad \|R_{2\rho}\|_{C^2(\mathcal M)}\lesssim \rho^{-3}, 
\end{equation*}
hold.
Substituting  the above representations of $u_0,u_1,u_2$ into the integral identity 
\eqref{identity5.1}, we obtain 
\begin{align*}
& \rho^{-\frac{1}{2}}\int_0^T\int_M\beta \partial_t u_0 \partial_t(u_1u_2)  dV_g  dt\\
&=\rho^{-\frac{1}{2}}\int_0^T \int_M \beta \mathrm{e}^{2\mathrm{i}\rho \varphi}(2\mathrm{i}\rho \partial_t \varphi)\chi_{\delta}^3 a_{0}a_{0}\bar a_0^{(0)} \mathrm{e}^{-2\mathrm{i} \rho \bar \varphi}(-2\mathrm{i} \rho\partial_t \bar \varphi)dV_g dt +\mathcal{O}(\rho^{-1})             \\  
 &=4\rho^{\frac{3}{2}}\int_0^T\int_M\beta \mathrm{e}^{-4\rho \Im \varphi} \chi_{\delta}^3 a_{0}a_{0}\bar a_{0}^{(0)} |\partial_t \varphi|^2dV_g dt+\mathcal{O}(\rho^{-1}).
\end{align*}
Notice that 
\begin{equation*}
|\partial_t \varphi|^2 a_{0}a_{0}\bar a_{0}^{(0)}|_{\varsigma}=c_0 b^{-\frac{7}{4}}\mathrm{e}^{-\frac{\sqrt{2}}{4}\int_{\tau_{-}}^\tau \gamma(s,0,0,0)ds}|\det Y(\tau)|^{-1} \left(\det Y(\tau) \right)^{-\frac{1}{2}},
\end{equation*}
where $c_0$ is a constant. Using equation \eqref{gjk}, we have 
\begin{equation*}
\Big||\det \bar g(\tau,z^{\prime})|-1\Big| \leq C |z^{\prime}|^2   \quad \text{near } \varsigma . 
\end{equation*}
Using the fact that 
\begin{equation*}
dt\wedge dV_g=b^{\frac{3}{2}} dV_{\bar g}= \sqrt{|\det \bar g|(\tau,z^{\prime})}  d \tau \wedge d z^{\prime},
\end{equation*}
we have
\begin{equation*}
\lim_{\rho \to +\infty} \rho^{\frac{3}{2}}\int_{\tau_{-}-\frac{\epsilon}{\sqrt 2}}^{\tau_+ +\frac{\epsilon}{\sqrt 2}} \int_{|z^{\prime}|<\delta} \beta b^{-\frac{7}{4}} \mathrm{e}^{-\frac{\sqrt 2}{4} \int_{\tau_{-}}^{\tau}\gamma(s,0,0,0)ds}\mathrm{e}^{-4\rho \Im \varphi} |\det Y(\tau)|^{-1} (\det Y(\tau))^{-\frac{1}{2}} b^{\frac{3}{2}} d\tau \wedge d z^{\prime}=0,
\end{equation*}
with $\delta$ sufficiently small. Using the method of stationary phase together with \eqref{station}, we obtain that
\begin{align*}
&\rho^{\frac{3}{2}} \int_{|z^{\prime}|<\delta} \tilde \beta \mathrm{e}^{-4\rho \Im \varphi} dz^{\prime}=
\tilde \beta(\tau,0)\frac{C_0}{\sqrt{\det(\Im H(\tau))}}+\mathcal{O}(\rho^{-1}) \\
=&C_1\tilde \beta(\tau,0)| \det Y(\tau)|+\mathcal O(\rho^{-1}),
\end{align*}
where $\tilde \beta(\tau,0)= \beta(\tau,0) b^{-\frac{1}{4}}(\tau,0) \mathrm{e}^{-\frac{\sqrt 2}{4}\int_{\tau_{-}}^\tau \gamma(s,0,0,0)ds}$. Since $T>\mathrm{diam}_{b^{-1}g} M$, we can extract the line integral 
\begin{equation}\label{jacobi}
\int_{\sigma}   \beta(\tau,0) b^{-\frac{1}{4}}(\tau,0) \mathrm{e}^{-\frac{\sqrt 2}{4}\int_{\tau_{-}}^\tau \gamma(s,0,0,0)ds}  (\det Y(\tau))^{-\frac{1}{2}}d \tau=0 ,
\end{equation}
from the integral identity \eqref{identity5.1}. To prove the unique recovery of $\beta$ from $\Lambda^\textrm{W}_{\beta}$, it remains to invoke the injectivity of the above weighted ray transform in \eqref{jacobi}.

If $(M,b^{-1}g)$ satisfies the foliation condition, we can also use the invertibility of the weighted ray transform with a single weight established in \cite{paternain_geodesic_2019}. Then we complete the proof.
\end{proof}

\subsection{Proof of Theorem \ref{thm3}} 
First, we apply  second order linearization method to derive an integral identity.
We then insert  Gaussian beam solutions into this identity and analyze its asymptotic expansion. 
The leading-order term yields the determination of $\beta+\frac{\kappa}{b}$.
After using this relation and boundary condition $\beta_1=\beta_2$ on $\partial M$, the next term gives a weighted ray transform of $\partial_{\tau}\beta$. 
By the injectivity of the corresponding ray transforms, we obtain the unique recovery of $\beta$ and $\kappa$.
\begin{proof}
Similar to the second order linearization method used in Subsection \ref{thm2p} , assume that $u$ solves the equation \eqref{JMGT} with Dirichlet boundary value
\begin{equation*}
f=\epsilon_1f_1+\epsilon_2f_2,
\end{equation*}
where $f_1,f_2$ vanish near $\{t=0\}$ and $\epsilon_1,\epsilon_2$ are small parameters. Denote by $u_j$, $j=1,2$, the solution to the linearized MGT equation \eqref{linearized1} with boundary value $f_j$. Also assume that $u_0$ solves the backward MGT equation \eqref{backward5.1}. Applying $\frac{\partial^2}{\partial \epsilon_1 \partial \epsilon_2}$ to \eqref{JMGT} and setting $\epsilon_1=\epsilon_2=0$, we obtain that $w=\frac{\partial^2 u}{\partial \epsilon_1 \epsilon_2}\Big|_{\epsilon_1=\epsilon_2=0}$ satisfies the equation
\begin{equation}
\left\{
\begin{aligned}
&\partial_t^3w + \alpha \partial_{t}^2 w  - b\Delta_g \partial_t w-c^2\Delta_g w = 2\partial_t \left(\beta \partial_tu_1\partial_tu_2+\kappa\langle\nabla_g u_1,\nabla_gu_2\rangle_g \right)
    && \text{in } \mathcal{M}, \\
&w= 0
    && \text{on } \Gamma, \\
&w(0,x)= \partial_t w(0,x) = \partial_t^2 w(0,x)=0
    && \text{on } M.
\end{aligned}
\right.
\end{equation}
Similar to \eqref{integration}, integration by parts yields that
\begin{align} \label{integration5.2}
&\int_0^T\int_{\partial M} \frac{\partial^2}{\partial \epsilon_1 \partial \epsilon_2}\Lambda^\textrm{K}_{\beta,\kappa}(\epsilon_1 f_1+\epsilon_2f_2)\Big|_{\epsilon_1=\epsilon_2=0} u_0 dS_g dt \nonumber \\
&=\int_0^T \int_M 2\beta\partial_tu_0\partial_tu_1 \partial_tu_2+2\kappa \partial_t u_0 \langle \nabla_gu_1,\nabla_gu_2\rangle_g dV_g dt.
\end{align}
Consider \eqref{integration5.2} with $\beta$ and $\kappa$ replaced by $\beta_j$ and $\kappa_j$, and subtract the two identities. Then we have
\begin{equation}\label{identity5.2}
 \int_0^T\int_M\beta \partial_t u_0 \partial_tu_1\partial_t u_2+\kappa\partial_t u_0\langle\nabla_gu_1, \nabla_gu_2\rangle_g  dV_g  dt=0,
\end{equation}
where $\beta$ denotes $\beta_1-\beta_2$ and $\kappa$ denotes $\kappa_1-\kappa_2$.

We construct exact solutions of the form 
\begin{align*}
&u_{1}=u_{2}=\mathrm{e}^{\mathrm{i}\rho \varphi}a_{\rho}+\mathcal{R}_{\rho}=\mathrm{e}^{\mathrm{i}\rho \varphi} \chi_{\delta} \sum_{k=0}^{N+1} \rho ^{-k} a_k+\mathcal R_{\rho}, \\
&u_{0}=\mathrm{e}^{-2\mathrm{i}\rho \bar \varphi} \bar{a}_{2\rho}^{(0)}+R_{2\rho}=\mathrm{e}^{-2\mathrm{i}\rho \bar \varphi} \chi_{\delta} \sum_{k=0}^{N+1} (2\rho)^{-k} \bar a_{k}^{(0)}+R_{2\rho},
\end{align*}
where the corresponding Gaussian beams  concentrate near the same null geodesic $\varsigma$ and the remainder estimates 
\begin{equation*}
\|\mathcal{R}_{\rho}\|_{C^2(\mathcal M)}\lesssim \rho^{-3}, \quad \|R_{2\rho}\|_{C^2(\mathcal M)}\lesssim \rho^{-3}, 
\end{equation*}
hold.
Substituting  the above representations of $u_0,u_1,u_2$ into the integral identity 
\eqref{identity5.2}, we obtain  
\begin{align*}
& \rho^{-\frac{3}{2}}\int_0^T\int_M\beta \partial_t u_0 \partial_tu_1\partial_tu_2+\kappa\partial_t u_0 \langle \nabla_g u_1,\nabla_g u_2 \rangle_g  dV_g  dt\\
&=\rho^{-\frac{3}{2}}\int_0^T \int_M \beta \mathrm{e}^{2\mathrm{i}\rho \varphi}(\mathrm{i}\rho \partial_t \varphi)^2\chi_{\delta}^3 a_{0}a_{0} \bar a_0^{(0)} \mathrm{e}^{-2\mathrm{i} \rho \bar \varphi}(-2\mathrm{i} \rho\partial_t \bar \varphi)\\
&+\kappa \mathrm{e}^{-2\mathrm{i}\rho \bar \varphi} (-2\mathrm{i} \rho\partial_t \bar \varphi) \mathrm{e}^{2\mathrm{i}\rho \varphi} (\mathrm{i} \rho)^2 a_{0}a_{0} \bar a_0^{(0)} \chi_{\delta}^3 \langle d\varphi,d\varphi \rangle_g dV_g dt +\mathcal{O}(\rho^{-1})             \\ 
&=2\mathrm{i}\rho^{\frac{3}{2}}\int_{\tau_{-}-\frac{\epsilon}{\sqrt 2}}^{\tau_+ +\frac{\epsilon}{\sqrt 2}} \int_{|z^{\prime}| <\delta} \mathrm{e}^{-4\rho \Im \varphi}a_{0} a_{0} \bar a_0^{(0)} \left( \beta |\partial_t \varphi|^2 \partial_t \varphi+ \kappa \partial_t \bar\varphi \langle d\varphi, d\varphi  \rangle_g \right) b^{\frac{3}{2}} d\tau \wedge d z^{\prime} +\mathcal{O}(\rho^{-1})      \\
 &=2\mathrm{i}\rho^{\frac{3}{2}}\int_{\tau_{-}-\frac{\epsilon}{\sqrt 2}}^{\tau_+ +\frac{\epsilon}{\sqrt 2}} \int_{|z^{\prime}| <\delta}\left( \beta+\frac{\kappa}{b}\right) \mathrm{e}^{-4\rho \Im \varphi} \chi_{\delta}^3 a_{0}a_{0}\bar a_0^{(0)} |\partial_t \varphi|^2 \partial_t \varphi b^{\frac{3}{2}}d\tau \wedge dz^{\prime}+\mathcal{O}(\rho^{-1}).
\end{align*}
By combining an argument analogous to the stationary phase method with the injectivity of the weighted ray transform,
 we derive that
\begin{equation}\label{relation}
\beta+\frac{\kappa}{b}=0. 
\end{equation}
Considering the terms of order $\rho^{\frac{1}{2}}$, we have 
\begin{align}
& \rho^{-\frac{1}{2}}\int_0^T\int_M \partial_t u_0 \left(\beta \partial_tu_1\partial_tu_2+\kappa \langle \nabla_g u_1,\nabla_g u_2 \rangle_g \right) dV_g  dt \nonumber \\
=& \rho^{-\frac{1}{2}}\int_0^T\int_M \mathrm{e}^{-2\mathrm{i}\rho \bar \varphi}(-2\mathrm{i} \rho \partial_t \bar \varphi \bar a_0^{(0)}+\partial_t \bar a_0^{(0)}-\mathrm{i}\partial_t \bar \varphi \bar a_1^{(0)})(\beta \mathrm{e}^{2\mathrm{i} \rho \varphi}(\mathrm{i} \rho \partial_t \varphi a_{0}+\partial_t a_{0}+\mathrm{i}\partial_t \varphi a_{1})^2 \nonumber \\
&+\kappa\mathrm{e}^{2\mathrm{i}\rho \varphi}\langle \mathrm{i} \rho a_{0} d\varphi+da_{0}+\mathrm{i}a_{1} d\varphi, \mathrm{i} \rho a_{0} d\varphi+da_{0}+\mathrm{i}a_{1} d\varphi  \rangle_{g} )dV_g dt \nonumber \\
=&\rho^{-\frac{1}{2}}\int_0^T\int_M \mathrm{e}^{-4 \rho \Im \varphi}(-2\mathrm{i} \rho \partial_t \bar \varphi \bar a_0^{(0)}+\partial_t \bar a_{0}^{(0)}-\mathrm{i}\partial_t \bar \varphi \bar a_1^{(0)})(-\rho^2(\beta+\frac{\kappa}{b})(\partial_t\varphi)^2 a_{0} a_{0} \nonumber \\
&+2\beta \mathrm{i} \rho \partial_t \varphi a_{0}(\partial_t a_{0}+ \mathrm{i}\partial_t \varphi a_{1})+2\kappa  \mathrm{i} \rho(\langle d \varphi ,da_{0}  \rangle_ga_{0}+ \mathrm{i} \langle d\varphi ,d\varphi  \rangle_ga_{0}a_{1}+\mathcal{O}(1)))dV_g  dt \nonumber \\
=& \rho^{-\frac{1}{2}}\int_0^T\int_M \mathrm{e}^{-4 \rho \Im \varphi}(-2\mathrm{i} \rho) \partial_t \bar \varphi \bar a_{0}^{(0)}  (2\mathrm{i}\rho)(a_{0}(\beta \partial_t \varphi \partial_t a_{0}-b \beta \langle d\varphi ,d a_{0} \rangle_g) \nonumber \\
&+\mathrm{i}(\beta+\frac{\kappa}{b})(\partial_t \varphi)^2 a_{0} a_{1})dV_g dt+\mathcal{O}(\rho^{-1}) \nonumber \\
=&4 \rho^{\frac{3}{2}} \int_0^T\int_M \beta \mathrm{e}^{-4 \rho \Im \varphi} \partial_t \bar \varphi \bar a_{0}^{(0)} a_{0}(\partial_t \varphi \partial_t a_{0}-b  \langle d\varphi ,d a_{0} \rangle_g)dV_g dt +\mathcal{O}(\rho^{-1}) \nonumber \\
=& -4 \rho^{\frac{3}{2}} \int_0^T\int_M \beta \mathrm{e}^{-4 \rho \Im \varphi} \partial_t \bar \varphi \bar a_{0}^{(0)} a_{0}  \langle d\varphi ,d a_{0} \rangle_{\bar g} dV_g dt +\mathcal{O}(\rho^{-1}) \nonumber \\
=& -4 \rho^{\frac{3}{2}} \int_{\tau_{-}-\frac{\epsilon}{\sqrt 2}}^{\tau_+ + \frac{\epsilon}{\sqrt 2}} \int_{|z^{\prime}|<\delta} \beta \mathrm{e}^{-4 \rho \Im \varphi} \partial_t \bar \varphi \bar a_{0}^{(0)} a_{0}  \langle d\varphi ,d a_{0} \rangle_{\bar g} b^{\frac{3}{2}} d \tau \wedge d z^{\prime} +\mathcal{O}(\rho^{-1}), \label{simp}
\end{align}
with sufficiently small $\delta$.
Notice that in the null geodesic $\varsigma$
\begin{equation*}
 \langle d\varphi ,d a_{0} \rangle_{\bar g}=\partial_{\tau} a_{0}.
\end{equation*}
Using the method of stationary phase, we simplify the above equation \eqref{simp} to 
\begin{equation*}
C\int_{\tau_{-}-\frac{\epsilon}{\sqrt 2}}^{\tau_+ + \frac{\epsilon}{\sqrt 2}} \beta \partial_{\tau} a_{0} d\tau+\mathcal{O}(\rho^{-1}).
\end{equation*}
Using the fact that $\beta=0$ on $\partial M$ from the assumption $\beta_j=0$ on $\partial M$, integrating by parts and 
letting $\rho \to +\infty$ yield that 
\begin{equation}
\int_{\sigma} (\partial_{\tau} \beta) a_{0} d\tau=\int_{\sigma} \partial_{\tau} \beta b^{-\frac{1}{4}}\mathrm{e}^{-\frac{\sqrt 2}{4}\int_{\tau_{-}}^{\tau}\gamma(s,0,0,0)ds}(\det Y(\tau))^{-\frac{1}{2}} d\tau=0.
\end{equation}
Using the injectivity of the weighted ray transform under the Geometric Assumption, we obtain
\[
\partial_\tau \beta = 0 \quad \text{along } \sigma.
\]
Since $\beta=0$ on $\partial M$ and every geodesic hits the boundary in a finite time, we obtain that 
\begin{equation*}
\beta=0   \quad \text{in } M.
\end{equation*}
Using equation \eqref{relation}, we have $\kappa=0$ in $M$. Then we complete the proof.
\end{proof}

\section{Conclusion and future work} \label{conclusion}
In this paper, we established uniqueness in inverse boundary value problems for the Jordan--Moore--Gibson--Thompson equation with quadratic nonlinearities of Westervelt and Kuznetsov type. The associated Dirichlet-to-Neumann map uniquely determines $\beta$ in the Westervelt-type model and $(\beta,\kappa)$ in the Kuznetsov-type model. The results hold in bounded Euclidean domains and on compact Riemannian manifolds under suitable geometric assumptions. The proof combines second order linearization with geometric optics and Gaussian beam constructions for the linearized MGT equation, reducing the inverse problem to the injectivity of weighted ray transforms.

In Theorem \ref{thm1}, we consider the case $\gamma> 0$. We note that our method remains applicable in the case $\gamma<0$.  The critical regime $\gamma=0$ corresponds to conservative-type dynamics and the absence of dissipation, and it would be of interest to investigate whether our uniqueness results can be extended to this degenerate case.

Beyond these questions, there are several further directions to explore. For the linear MGT equation, it is natural to ask whether the Dirichlet-to-Neumann map uniquely determines the coefficients $(\alpha,c)$ on a compact Riemannian manifold. For the nonlinear setting, quantitative stability estimates for recovering nonlinear coefficients from the DtN map on Riemannian manifolds are also of clear interest. Finally, it would be important to develop effective reconstruction algorithms for the JMGT equation. Our approach provides a theoretical foundation for reconstructing the nonlinear coefficients from boundary measurements.

\section*{Acknowledgements}
 X. Xu is partially supported by the National Key Research and Development Program of China (No. 2024YFA1012303), National Natural Science Foundation of China (No. 12525112), and the Open Research Project of Innovation Center of Yangtze River Delta, Zhejiang University. T. Zhou is partially supported by the National Key Research and Development Program of China (No. 2024YFA1012301), the Zhejiang Provincial Basic Public Welfare Research Program [
Grant Number LDQ24A010001], and NSFC Grant 12371426.

\bibliographystyle{amsplain}
\bibliography{ref}
\end{document}